\title[]{Characterizations of quasitrivial symmetric nondecreasing associative operations}
\author{Jimmy Devillet}
\address{Mathematics Research Unit, University of Luxembourg, Maison du Nombre, 6, avenue de la Fonte, L-4364 Esch-sur-Alzette, Luxembourg}
\email{jimmy.devillet[at]uni.lu}
\author{Gergely Kiss}
\address{Alfred Renyi Institute of Mathematics, Hungarian Academy of Science, HU-1053 Budapest, Re\'altanoda u.\ 13-15, Hungary}
\email{kigergo57[at]gmail.com}
\author{Jean-Luc Marichal}\thanks{Communicated by Mikhail Volkov}
\address{Mathematics Research Unit, University of Luxembourg, Maison du Nombre, 6, avenue de la Fonte, L-4364 Esch-sur-Alzette, Luxembourg}
\email{jean-luc.marichal[at]uni.lu}
\date{}
\theoremstyle{plain}
\newtheorem{theorem}{Theorem}[section]
\newtheorem{lemma}[theorem]{Lemma}
\newtheorem{proposition}[theorem]{Proposition}
\newtheorem{corollary}[theorem]{Corollary}
\theoremstyle{definition}
\newtheorem{definition}[theorem]{Definition}
\theoremstyle{remark}
\newtheorem{remark}{Remark}
\newtheorem{example}{Example}
\newcommand{\bfx}{\mathbf{x}}
\newcommand{\bfy}{\mathbf{y}}
\newcommand{\R}{\mathbb{R}}
\newcommand{\Cdot}{\boldsymbol{\cdot}}
\begin{document}
\begin{abstract}
We provide a description of the class of $n$-ary operations on an arbitrary chain that are quasitrivial, symmetric, nondecreasing, and associative. We also prove that associativity can be replaced with bisymmetry in the definition of this class. Finally we investigate the special situation where the chain is finite.
\end{abstract}

\keywords{$n$-ary semigroup, associativity, bisymmetry, ultrabisymmetry, neutral element, axiomatization.}

\subjclass[2010]{Primary 20N15, 39B72; Secondary 20M14.}

\maketitle

\section{Introduction}

Let $X$ be a nonempty set and let $n\geq 2$ be an integer. The $n$-ary operations $F\colon X^n\to X$ satisfying the associativity property (see Definition~\ref{de:def1} below) have been extensively investigated since the pioneering work by D\"ornte \cite{Dor28} and Post \cite{Pos40}. In the algebraic language, when $F$ is such an operation, the pair $(X,F)$ is called an $n$-ary semigroup. For a background on this topic see, e.g., \cite{DudMuk96,DudMuk06,Vin61} and the references therein.

In this paper we investigate the class of $n$-ary operations $F\colon X^n\to X$ on a chain $(X,\leq)$ that are quasitrivial, symmetric, nondecreasing, and associative (quasitriviality means that $F$ always outputs one of its input values). After presenting some definitions and preliminary results in Section 2, we provide in Section 3 a characterization of these operations and show that they are derived from associative binary operations (Theorems~\ref{thm:main2}, \ref{thm:f456dfs}, and Corollary~\ref{cor:fs5fs}). We also discuss the special situation where these operations have neutral elements (Proposition~\ref{prop:eee}), in which case they are derived from the so-called idempotent uninorms (Corollary~\ref{cor:fs6}). In Section 4 we investigate certain bisymmetric $n$-ary operations and derive a few equivalences among properties involving associativity and bisymmetry. For instance we show that if an $n$-ary operation is quasitrivial and symmetric, then it is associative if and only if it is bisymmetric (Corollary~\ref{cor:24f}). This observation enables us to replace associativity with bisymmetry in some of our characterization results. Finally, in Section 5 we particularize our results to the special case where the operations are defined on finite chains (Corollary~\ref{cor:f456dfs2} and Theorem~\ref{thm:GC}).

We use the following notation throughout. A chain $(X,\leq)$ will simply be denoted by $X$ if no confusion may arise. For any chain $X$ and any $x,y\in X$, we use the symbols $x\wedge y$ and $x\vee y$ to represent $\min\{x,y\}$ and $\max\{x,y\}$, respectively. For any integer $k\geq 0$, we set $[k]=\{1,\ldots,k\}$. Finally, for any integer $k\geq 0$ and any $x\in X$, we set $k\Cdot x=x,\ldots,x$ ($k$ times). For instance, we have $F(3\Cdot x,2\Cdot y,0\Cdot z)=F(x,x,x,y,y)$.

\section{Preliminaries}

In this section we introduce some basic definitions and present some preliminary results. Let $X$ be an arbitrary nonempty set.

\begin{definition}\label{de:def1}
An operation $F\colon X^n\to X$ is said to be
\begin{itemize}
\item \emph{idempotent} if $F(n\Cdot x)=x$ for all $x\in X$;
\item \emph{quasitrivial} (or \emph{conservative}, \emph{selective}) if $F(x_1,\ldots,x_n)\in\{x_1,\ldots,x_n\}$ for all $x_1,\ldots,x_n\in X$;
\item \emph{symmetric} if $F(x_1,\ldots,x_n)$ is invariant under any permutation of $x_1,\ldots,x_n$;
\item \emph{associative} if
\begin{eqnarray*}
\lefteqn{F(x_1,\ldots,x_{i-1},F(x_i,\ldots,x_{i+n-1}),x_{i+n},\ldots,x_{2n-1})}\\
&=& F(x_1,\ldots,x_i,F(x_{i+1},\ldots,x_{i+n}),x_{i+n+1},\ldots,x_{2n-1})
\end{eqnarray*}
for all $x_1,\ldots,x_{2n-1}\in X$ and all $i\in [n-1]$;
\item\emph{bisymmetric} if
$$
F(F(\mathbf{r}_1),\ldots,F(\mathbf{r}_n)) ~=~ F(F(\mathbf{c}_1),\ldots,F(\mathbf{c}_n))
$$
for all $n\times n$ matrices $[\mathbf{c}_1 ~\cdots ~\mathbf{c}_n]=[\mathbf{r}_1 ~\cdots ~\mathbf{r}_n]^T\in X^{n\times n}$.
\end{itemize}
If $(X,\leq)$ is a chain, then $F\colon X^n\to X$ is said to be
\begin{itemize}
\item \emph{nondecreasing} (for $\leq$) if $F(x_1,\ldots,x_n)\leq F(x'_1,\ldots,x'_n)$ whenever $x_i\leq x'_i$ for all $i\in [n]$.
\end{itemize}
\end{definition}

\begin{definition}
Let $F\colon X^n\to X$ be an operation.
\begin{itemize}
\item An element $e\in X$ is said to be a \emph{neutral element} of $F$ if
$$
F((i-1)\Cdot e,x,(n-i)\Cdot e) ~=~ x
$$
for all $x\in X$ and all $i\in [n]$. A neutral element need not be unique when $n\geq 3$ (e.g., $F(x_1,x_2,x_3)\equiv x_1+x_2+x_3~(\mathrm{mod}~2)$ on $X=\mathbb{Z}_2$).
\item The points $\bfx$ and $\bfy$ of $X^n$ are said to be \emph{connected for $F$} if $F(\bfx)=F(\bfy)$. The point $\bfx$ of $X^n$ is said to be \emph{isolated for $F$} if it is not connected to another point in $X^n$.
\end{itemize}
\end{definition}

\begin{lemma}\label{lemma:IdIs}
Let $F\colon X^n\to X$ be an idempotent operation. If $\bfx=(x_1,\ldots,x_n)\in X^n$ is isolated for $F$, then necessarily $x_1=\cdots =x_n$.
\end{lemma}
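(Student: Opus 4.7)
The plan is to argue by contraposition using the constant tuple built from the value $F(\bfx)$. Suppose $\bfx=(x_1,\ldots,x_n)\in X^n$ is isolated for $F$, and set $a=F(\bfx)\in X$. The candidate point to compare $\bfx$ with is the diagonal tuple $\bfy=(n\Cdot a)$.

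By idempotence we have $F(\bfy)=F(n\Cdot a)=a=F(\bfx)$, so $\bfx$ and $\bfy$ are connected for $F$ in the sense of the preceding definition. Since $\bfx$ is assumed isolated, no point distinct from $\bfx$ can be connected to it, forcing $\bfy=\bfx$. This equality reads $x_1=\cdots=x_n=a$, which is exactly the desired conclusion.

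There is essentially no obstacle here: once one observes that idempotence produces a ready-made partner (the diagonal tuple carrying the value $F(\bfx)$) for any $\bfx$, the argument collapses to a single comparison. The only thing to be mildly careful about is the logical direction, namely that \emph{isolated} is a universally quantified statement (no $\bfy\neq\bfx$ with $F(\bfy)=F(\bfx)$), so we only need to exhibit one such $\bfy$ whenever $\bfx$ is not the diagonal tuple $(n\Cdot a)$.
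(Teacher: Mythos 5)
Your proof is correct and is essentially identical to the paper's: both compare $\bfx$ with the diagonal tuple $(n\Cdot F(\bfx))$, use idempotence to see they are connected, and conclude from isolation that they coincide. (The opening remark about ``contraposition'' is a slight mislabel---the argument as written is direct---but this does not affect its validity.)
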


\begin{proof}
Let $\bfx=(x_1,\ldots,x_n)$ be isolated for $F$. From the identity $F(\bfx)=F(n\Cdot F(\bfx))$ we immediately derive $\bfx=(n\Cdot F(\bfx))$, that is, $x_1=\cdots =x_n=F(\bfx)$.
\end{proof}

\begin{remark}
We observe that any quasitrivial operation $F\colon X^n\to X$ has at most one isolated point. Indeed, such an operation $F$ is necessarily idempotent and hence the result follows from Lemma~\ref{lemma:IdIs} and the fact that we have
$$
F((n-1)\Cdot x,y) ~\in ~\{x,y\} ~=~ \{F(n\Cdot x),F(n\Cdot y)\}
$$
for any $x,y\in X$.
\end{remark}

\begin{lemma}\label{lemma:ee}
Let $F\colon X^n\to X$ be a quasitrivial operation and let $e\in X$. If $(n\Cdot e)$ is isolated for $F$, then $e$ is a neutral element. The converse holds if $n=2$.
\end{lemma}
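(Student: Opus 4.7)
\medskip

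My plan is to handle the two directions separately, using quasitriviality as the main lever. For the forward direction, I would fix $x\in X$ and $i\in[n]$ and observe that by quasitriviality
$$
F((i-1)\Cdot e,x,(n-i)\Cdot e)\ \in\ \{e,x\}.
$$
Since $F$ is quasitrivial it is idempotent, so $F(n\Cdot e)=e$. If the above value is $e$, then $((i-1)\Cdot e,x,(n-i)\Cdot e)$ is connected to $(n\Cdot e)$, and the hypothesis that $(n\Cdot e)$ is isolated forces $((i-1)\Cdot e,x,(n-i)\Cdot e)=(n\Cdot e)$, hence $x=e$, in which case the identity $F((i-1)\Cdot e,x,(n-i)\Cdot e)=x$ holds trivially. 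Otherwise the value is $x$, which is exactly what we need. Thus $e$ is a neutral element.

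For the converse when $n=2$, I would show directly that no point distinct from $(e,e)$ is mapped to $e$. Suppose $F(a,b)=e$ for some $(a,b)\in X^2$. By quasitriviality, $e\in\{a,b\}$, so either $a=e$ or $b=e$. If $a=e$, then neutrality gives $F(e,b)=b$, forcing $b=e$; similarly if $b=e$, then $a=e$. Hence $(e,e)$ is the only point whose image under $F$ equals $F(e,e)=e$, meaning $(e,e)$ is isolated.

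No real obstacle is expected: the whole argument is a short two-step chase using quasitriviality to restrict the possible outputs to $\{e,x\}$ and then exploiting isolation. The only subtlety worth a brief remark is that the converse genuinely fails for $n\geq 3$, as already illustrated by the example $F(x_1,x_2,x_3)\equiv x_1+x_2+x_3\pmod 2$ mentioned earlier in the excerpt, where both $0$ and $1$ are neutral elements but neither $(n\Cdot 0)$ nor $(n\Cdot 1)$ is isolated (e.g.\ $F(0,1,1)=0=F(0,0,0)$). This is why the statement only claims the converse in the binary case.
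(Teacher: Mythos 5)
Your forward direction is correct and is essentially the paper's argument run in the contrapositive: quasitriviality restricts $F((i-1)\Cdot e,x,(n-i)\Cdot e)$ to $\{e,x\}$, and isolation of $(n\Cdot e)$ rules out the value $e$ unless $x=e$. For the converse at $n=2$ the paper merely cites an external reference, whereas your short direct argument (any $(a,b)$ with $F(a,b)=e$ must have $e\in\{a,b\}$ by quasitriviality, and then neutrality forces $(a,b)=(e,e)$) is correct and makes the lemma self-contained; your $\mathbb{Z}_2$ counterexample for $n\geq 3$ is also valid, complementing the three-element example the paper gives after the lemma.
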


\begin{proof}
We proceed by contradiction. If $e$ is not a neutral element, then there exist $i\in [n]$ and $u\in X\setminus\{e\}$ such that
$$
F((i-1)\Cdot e,u,(n-i)\Cdot e)~=~ e ~=~ F(n\Cdot e).
$$
It follows that $(n\Cdot e)$ is not isolated for $F$, which is a contradiction. The case $n=2$ was proved in \cite{CouDevMar}.
\end{proof}

\begin{example}
The following example shows that the converse of Lemma~\ref{lemma:ee} does not hold when $n\geq 3$. Let $X=\{a,b,e\}$ and let $F\colon X^3\to X$ be defined as
$$
F(x,y,z) ~=~
\begin{cases}
a, & \text{if there are more $a$'s than $b$'s among $x,y,z$,}\\
b, & \text{if there are more $b$'s than $a$'s among $x,y,z$,}\\
e, & \text{otherwise}.
\end{cases}
$$
It is easy to see that this operation is quasitrivial and has $e$ as the neutral element. However, we have $F(e,e,e)=F(a,b,e)$ and hence the point $(e,e,e)$ is not isolated for $F$.
\end{example}

\section{Associative operations}

In this section we provide a characterization of the $n$-ary operations on a given chain that are quasitrivial, symmetric, nondecreasing, and associative, and we show that these operations are derived from associative binary operations. We also examine the special case where these operations have neutral elements.

\begin{proposition}\label{prop:MarMayTor}
Let $X$ be a chain. If $G\colon X^2\to X$ is quasitrivial, symmetric, and nondecreasing, then it is associative.
\end{proposition}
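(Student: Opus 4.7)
The strategy is to show that $G$ must coincide with the maximum operation with respect to some total order on $X$, from which associativity follows at once.

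First I would define a binary relation $\preceq$ on $X$ by declaring $x \preceq y$ iff $G(x,y)=y$. Reflexivity follows from idempotence (itself a consequence of quasitriviality), totality follows from quasitriviality applied to each pair, and antisymmetry follows from the symmetry of $G$, since $G(x,y)=y$ and $G(y,x)=x$ combined with $G(x,y)=G(y,x)$ force $x=y$. The nontrivial axiom to verify is transitivity, and this is precisely where nondecreasingness enters the picture.

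To establish transitivity, I would fix pairwise distinct $x,y,z\in X$ with $G(x,y)=y$ and $G(y,z)=z$, and consider the six possible orderings of $\{x,y,z\}$ in the chain $(X,\leq)$. In four of these orderings, monotonicity of $G$ together with the hypotheses directly forces $G(x,z)=z$; for the remaining two orderings, a short monotonicity argument yields a contradiction with the hypotheses and rules those configurations out entirely. As a representative example, in the configuration $x<z<y$, applying nondecreasingness in the first argument to $G(x,y)=y$ gives $G(z,y)\geq y$, hence $G(z,y)=y$ by quasitriviality; but symmetry of $G$ yields $G(z,y)=G(y,z)=z<y$, a contradiction. Analogous short arguments handle the other orderings.

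Once transitivity is secured, $\preceq$ is a total order on $X$, and the combination of quasitriviality, symmetry, and the definition of $\preceq$ shows that $G(x,y)$ equals the $\preceq$-maximum of $\{x,y\}$ for every $x,y\in X$. Associativity is then automatic, since both $G(G(x,y),z)$ and $G(x,G(y,z))$ reduce to the $\preceq$-maximum of $\{x,y,z\}$. The main obstacle is the transitivity step, with its case analysis on the relative chain-positions of $x,y,z$; every other part of the argument is essentially formal.
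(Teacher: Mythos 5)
Your argument is correct, but it is not the one the paper uses: the paper simply defers to \cite[Proposition~2]{MarMayTor03}, noting that the proof there is algebraic and carries over to arbitrary chains, whereas you give a self-contained proof. Your route is to extract the linear order $x\preceq y\Leftrightarrow G(x,y)=y$ --- the same relation the paper introduces later in \eqref{eq:defLor4} inside the proof of Proposition~\ref{prop:ack} --- but with a key twist: in Proposition~\ref{prop:ack} transitivity of $\preceq$ is obtained from the \emph{assumed} associativity of $H$, while you derive transitivity from nondecreasing monotonicity via the six-way case analysis on the $\leq$-positions of $x,y,z$ (four cases forcing $G(x,z)=z$, two cases being outright impossible). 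I checked the cases and they all go through; note only that totality of $\preceq$ uses symmetry as well as quasitriviality, since $G(x,y)=x$ yields $y\preceq x$ only via $G(y,x)=G(x,y)$. What your approach buys is a proof of the binary case that already exhibits the representation $G(x,y)=x\vee_{\preceq}y$ of Theorem~\ref{thm:f456dfs}(iii) directly (and, by Proposition~\ref{prop:Bl56}, the order you construct is automatically single-peaked w.r.t.\ $\leq$); the cost is the case analysis, which the paper avoids by outsourcing the statement.
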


\begin{proof}
This result was established in the special case where $X$ is the real unit interval $[0,1]$ in \cite[Proposition~2]{MarMayTor03}. The proof therein is purely algebraic and works for any nonempty chain $X$.
\end{proof}

\begin{theorem}\label{thm:main2}
Let $X$ be a chain and let $F\colon X^n\to X$ $(n\geq 3)$ be quasitrivial, symmetric, and nondecreasing. The following assertions are equivalent.
\begin{enumerate}
\item[$(i)$] $F$ is associative.
\item[$(ii)$] $F((n-1)\Cdot x,y) = F(x,(n-1)\Cdot y)$ for all $x,y\in X$.
\item[$(iii)$] There exists a quasitrivial and nondecreasing operation $G\colon X^2\to X$ such that
\begin{equation}\label{eq:FUmm}
F(x_1,\ldots,x_n) ~=~ \textstyle{G(\bigwedge_{i=1}^nx_i{\,},\bigvee_{i=1}^nx_i)},\qquad x_1,\ldots,x_n\in X.
\end{equation}
\end{enumerate}
Moreover, the operation $G$ is unique, symmetric, and associative in assertion $(iii)$.
\end{theorem}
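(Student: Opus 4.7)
The plan is to prove the cycle $(iii) \Rightarrow (i) \Rightarrow (ii) \Rightarrow (iii)$ and then address the uniqueness, symmetry, and associativity of $G$ separately. The implication $(iii) \Rightarrow (i)$ is the easiest: given the representation $F(x_1,\ldots,x_n) = G(\bigwedge_i x_i, \bigvee_i x_i)$ with $G$ quasitrivial, any inner application $F(x_i,\ldots,x_{i+n-1})$ lies between $\bigwedge_{j=i}^{i+n-1}x_j$ and $\bigvee_{j=i}^{i+n-1}x_j$, so the outer $F$ sees the same overall minimum and maximum of the $2n-1$ arguments regardless of where the inner $F$ is placed. Hence the associativity identity holds.

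For $(i) \Rightarrow (ii)$, I fix $x,y\in X$ and assume without loss of generality that $x<y$. Write $u = F((n-1)\Cdot x, y)$ and $v = F(x, (n-1)\Cdot y)$; quasitriviality places both in $\{x, y\}$, and a componentwise comparison of the sorted versions of these tuples together with symmetry and nondecreasingness of $F$ gives $u \le v$. I then consider the $2n-1$ arguments $((n-1)\Cdot x, n\Cdot y)$; by iteratively shifting the inner window through associativity and permuting with symmetry, I may group any $n$ of these arguments as the inner $F$. Grouping the $n$ copies of $y$ gives the value $F((n-1)\Cdot x, y) = u$, while grouping the $(n-1)$ copies of $x$ together with one $y$ gives $F(u, (n-1)\Cdot y)$. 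Therefore $u = F(u, (n-1)\Cdot y)$. If $u = x$ this reads $x = v$, while if $u = y$ the inequality $v \ge u$ forces $v = y$. In either case $u = v$, establishing (ii).

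For $(ii) \Rightarrow (iii)$, fix $(x_1,\ldots,x_n)$ and set $m = \bigwedge_i x_i$, $M = \bigvee_i x_i$. Sorting the tuple increasingly and comparing componentwise with $((n-1)\Cdot m, M)$ and with $(m, (n-1)\Cdot M)$, nondecreasingness and symmetry of $F$ yield the double bound $F((n-1)\Cdot m, M) \le F(x_1,\ldots,x_n) \le F(m, (n-1)\Cdot M)$; assertion (ii) collapses the two extremes, so $F(x_1,\ldots,x_n) = F((n-1)\Cdot m, M)$. Defining $G(a,b) = F((n-1)\Cdot(a\wedge b), a\vee b)$ then produces the required representation and is manifestly quasitrivial and nondecreasing. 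For the \emph{moreover} clause, $G$ is symmetric by construction; any quasitrivial nondecreasing $G'$ satisfying the representation coincides with $G$ on pairs $(a,b)$ with $a\le b$ (take $(x_1,\ldots,x_n) = ((n-1)\Cdot a, b)$), and imposing symmetry extends the agreement to all pairs, so $G$ is uniquely determined; finally, $G$ is associative by Proposition~\ref{prop:MarMayTor}.

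The main obstacle I expect is $(i) \Rightarrow (ii)$: associativity alone yields the fixed-point relation $u = F(u, (n-1)\Cdot y)$, which is tautological when $u = y$, so nondecreasingness (through the inequality $u \le v$) must be brought in to close the argument.
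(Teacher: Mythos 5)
Your overall architecture --- the cycle $(iii)\Rightarrow(i)\Rightarrow(ii)\Rightarrow(iii)$, the monotonicity sandwich $F((n-1)\Cdot m,M)\leq F(x_1,\ldots,x_n)\leq F(m,(n-1)\Cdot M)$, and the appeal to Proposition~\ref{prop:MarMayTor} for the associativity of $G$ --- is the same as the paper's. Your $(i)\Rightarrow(ii)$ and $(ii)\Rightarrow(iii)$ are correct; the only difference in $(i)\Rightarrow(ii)$ is that you derive the fixed-point relation $u=F(u,(n-1)\Cdot y)$ directly from the tuple with $n-1$ copies of $x$ and $n$ copies of $y$ and close the argument with $u\leq v$, whereas the paper runs the dual computation (with $n$ copies of $x$) as a proof by contradiction. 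That is a cosmetic variation. Your reading of the ``moreover'' clause is also sound: since \eqref{eq:FUmm} only ever evaluates $G$ at pairs $(a,b)$ with $a\leq b$, uniqueness is properly asserted within the class of symmetric $G$'s (for $F=\max$ on a two-element chain the second projection also satisfies \eqref{eq:FUmm}), and you are right to say so explicitly.

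The one step that does not work as written is $(iii)\Rightarrow(i)$. You argue that since the inner value $F(x_i,\ldots,x_{i+n-1})$ lies between the window's minimum and maximum, ``the outer $F$ sees the same overall minimum and maximum of the $2n-1$ arguments regardless of where the inner $F$ is placed.'' This intermediate claim is false: the extremal argument can be absorbed by the inner application. Take $n=3$, arguments $(1,5,5,5,5)$, and $G$ with $G(1,5)=5$; placing the inner $F$ on $(1,5,5)$ returns $5$, so the outer tuple is $(5,5,5)$, whose minimum is $5$, not the overall minimum $1$. The conclusion of the step is nevertheless true, but it requires the argument the paper gives (and which you yourself deploy in $(ii)\Rightarrow(iii)$): with $a,b$ the overall minimum and maximum, assume without loss of generality that $G(a,b)=a$; then for any $n$-tuple of values in $[a,b]$ that contains $a$, quasitriviality and nondecreasingness of $G$ give $a=G(a,a)\leq G(a,\cdot)\leq G(a,b)=a$, so $F$ of that tuple equals $a$; since $a$ occurs among the $2n-1$ arguments, it either lies outside the window (and hence survives into the outer tuple) or lies inside it (in which case the inner value is $a$), so both sides of the associativity identity equal $a$. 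With that repair your proof is complete and essentially coincides with the paper's.
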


\begin{proof}
$(iii)\Rightarrow (i)$. Let $x_1,\ldots,x_{2n-1}\in X$ and $k\in [n-1]$. We only need to prove that
\begin{eqnarray}
\lefteqn{F(x_1,\ldots,x_{k-1},F(x_k,\ldots,x_{k+n-1}),x_{k+n},\ldots,x_{2n-1})}\nonumber\\
&=& F(x_1,\ldots,x_k,F(x_{k+1},\ldots,x_{k+n}),x_{k+n+1},\ldots,x_{2n-1}).\label{eq:av2}
\end{eqnarray}
Set $a=\bigwedge_{i=1}^{2n-1}x_i$ and $b=\bigvee_{i=1}^{2n-1}x_i$. By quasitriviality of $G$ we have $G(a,b)\in\{a,b\}$. Let us assume that $G(a,b)=a$ (the other case can be dealt with dually). We observe that if $a\in\{x_{\ell_1},\ldots,x_{\ell_n}\}$ for some pairwise distinct $\ell_1,\ldots,\ell_n\in [2n-1]$, then $F(x_{\ell_1},\ldots,x_{\ell_n})=a$. Indeed, by quasitriviality and nondecreasing monotonicity of $G$, we have $a=G(a,a)\leq G(a,\bigvee_{i=1}^nx_{\ell_i})\leq G(a,b)=a$, where $G(a,\bigvee_{i=1}^nx_{\ell_i})=F(x_{\ell_1},\ldots,x_{\ell_n})$. Now, combining this observation with the fact that $a\in\{x_1,\ldots,x_{2n-1}\}$ we immediately see that both sides of Eq.~\eqref{eq:av2} reduce to $a$.

$(i)\Rightarrow (ii)$. Suppose that $(i)$ holds and $(ii)$ does not. Then by quasitriviality there exist $x,y\in X$ with $x\neq y$ such that
\begin{itemize}
\item either $F((n-1)\Cdot x,y)=x$ and $F(x,(n-1)\Cdot y)=y$,
\item or $F((n-1)\Cdot x,y)=y$ and $F(x,(n-1)\Cdot y)=x$.
\end{itemize}
The second case cannot hold. Indeed, assuming for instance that $x<y$, by nondecreasing monotonicity we would have $x < y = F((n-1)\Cdot x,y) \leq  F(x,(n-1)\Cdot y) = x$, a contradiction. Thus we must have  $F((n-1)\Cdot x,y)=x$ and $F(x,(n-1)\Cdot y)=y$. We then have
$$
F(F(n\Cdot x),(n-1)\Cdot y) ~=~ y ~\neq ~ x ~=~ F((n-1)\Cdot x,F(x,(n-1)\Cdot y)),
$$
which shows that $F$ is not associative, a contradiction.

$(ii)\Rightarrow (iii)$. Define $G\colon X^2\to X$ as 
$$
G(x,y)=F((n-1)\Cdot x,y)=F(x,(n-1)\Cdot y).
$$ By definition, $G$ is quasitrivial, symmetric, and nondecreasing. It is also associative by Proposition~\ref{prop:MarMayTor}. Now, let $x_1,\ldots,x_n\in X$ and let $a=\bigwedge_{i=1}^n x_i$ and $b=\bigvee_{i=1}^nx_i$. By symmetry and nondecreasing monotonicity of $F$, we have
$$
G(a,b) ~=~ F((n-1)\Cdot a,b) ~\leq ~ F(x_1,\ldots,x_n) ~\leq ~ F(a,(n-1)\Cdot b) ~=~ G(a,b),
$$
which proves \eqref{eq:FUmm}. We then observe that $G$ is necessarily unique.
\end{proof}

\begin{remark}
\begin{enumerate}
\item[(a)] Theorem~\ref{thm:main2} also holds for $n=2$ but brings no information in this case. However, Proposition~\ref{prop:MarMayTor} shows that associativity follows from quasitriviality, symmetry, and nondecreasing monotonicity.
\item[(b)] Each of the conditions of Theorem~\ref{thm:main2} is necessary as soon as $n\geq 3$. Indeed,
\begin{itemize}
\item the sum operation $F(x_1,\ldots,x_n)=\sum_{i=1}^nx_i$ over the reals $X=\R$ is symmetric, nondecreasing, and associative, but not quasitrivial;
\item the projection operation $F(x_1,\ldots,x_n)=x_1$ over any chain $X$ is quasitrivial, nondecreasing, and associative, but not symmetric;
\item the ternary operation $F\colon L_3^2\to L_3$, with $L_3=\{1,2,3\}$, defined as $F(1,1,1)=1$, $F(x,y,z)=2$, if $2\in\{x,y,z\}$, $F(x,y,z)=3$ if $3\in\{x,y,z\}$ and $2\notin\{x,y,z\}$ is quasitrivial, symmetric, and associative, but it is not nondecreasing;
\item the ternary median operation, defined on any chain by
$$
\mathrm{median}(x,y,z) ~=~ (x\wedge y)\vee (y\wedge z)\vee (z\wedge x),
$$
is quasitrivial, symmetric, and nondecreasing, but it is not associative.
\end{itemize}
None of the four operations above is of the form given in \eqref{eq:FUmm}.
\item[(c)] In the proof of Theorem~\ref{thm:main2} the symmetry condition is used only to prove that $(ii)\Rightarrow (iii)$. We observe that this condition can then be relaxed into the following one:
$$
F((i-1)\Cdot x,y,(n-i)\Cdot x) ~=~ F((j-1)\Cdot x,y,(n-j)\Cdot x),\quad i,j\in [n], ~x,y\in X.
$$
\end{enumerate}
\end{remark}

\begin{definition}[{see \cite{Ack,DudMuk06}}]\label{de:FH5}
Let $F\colon X^n\to X$ and $H\colon X^2\to X$ be associative operations. $F$ is said to be \emph{derived from} (or \emph{reducible to}) $H$ if $F(x_1,\ldots,x_n)=x_1\circ\cdots\circ x_n$ for all $x_1,\ldots,x_n\in X$, where $x\circ y=H(x,y)$.
\end{definition}

\begin{corollary}\label{cor:fs5fs}
Let $X$ be a chain. If $F\colon X^n\to X$ is of the form \eqref{eq:FUmm}, where $G\colon X^2\to X$ is quasitrivial, symmetric, and nondecreasing, then $F$ is associative and derived from $G$.
\end{corollary}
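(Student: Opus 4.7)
The plan is to handle the two conclusions separately.

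For associativity, I would first observe that $F$, being built from $G$ via the symmetric and nondecreasing operations $\bigwedge$ and $\bigvee$, inherits symmetry, nondecreasing monotonicity, and quasitriviality from $G$ (the last because $G(a,b)\in\{a,b\}\subseteq\{x_1,\ldots,x_n\}$ where $a=\bigwedge_i x_i$ and $b=\bigvee_i x_i$). Hence $F$ falls under the hypotheses of Theorem~\ref{thm:main2}, and the implication $(iii)\Rightarrow (i)$ proved there gives the associativity of $F$ directly. Alternatively, one can just invoke the argument already carried out in that implication.

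For the second claim, I would use that $G$ is itself associative by Proposition~\ref{prop:MarMayTor}, so the iterated expression $x_1\circ\cdots\circ x_n$ (with $\circ=G$) is unambiguous. The goal is then to show
\[
x_1\circ\cdots\circ x_n ~=~ G\bigl(\textstyle\bigwedge_{i=1}^n x_i,\bigvee_{i=1}^n x_i\bigr).
\]
By symmetry and associativity of $G$, I may assume the arguments are sorted as $y_1\leq\cdots\leq y_n$, so that $y_1=a$ and $y_n=b$. The key lemma (already implicit in the proof of $(iii)\Rightarrow (i)$ of Theorem~\ref{thm:main2}) is that for any $y$ with $a\leq y\leq b$, quasitriviality plus monotonicity of $G$ force $G(a,y)=a$ when $G(a,b)=a$, and $G(y,b)=b$ when $G(a,b)=b$. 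Indeed, in the first case, $a=G(a,a)\leq G(a,y)\leq G(a,b)=a$; the second case is dual.

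I would then split into the two cases. If $G(a,b)=a$: compute left to right, starting from $y_1\circ y_2=G(a,y_2)=a$, and inductively $(y_1\circ\cdots\circ y_{k-1})\circ y_k = G(a,y_k)=a$, so the whole product equals $a=G(a,b)$. If $G(a,b)=b$: compute right to left, $y_{n-1}\circ y_n=G(y_{n-1},b)=b$, and inductively the product equals $b=G(a,b)$. Either way we land on $G(a,b)=F(x_1,\ldots,x_n)$, establishing that $F$ is derived from $G$. I do not anticipate a real obstacle here; the only care is in invoking quasitriviality + monotonicity to pin down $G(a,y)$ and $G(y,b)$ for intermediate $y\in[a,b]$, which is the same trick used in Theorem~\ref{thm:main2}.
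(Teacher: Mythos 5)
Your proposal is correct and follows essentially the same route as the paper: associativity is obtained by checking that $F$ inherits quasitriviality, symmetry, and monotonicity and then invoking the implication $(iii)\Rightarrow(i)$ of Theorem~\ref{thm:main2}, while the reduction of $x_1\circ\cdots\circ x_n$ to $G(a,b)$ rests on the same sandwich argument $a=G(a,a)\leq G(a,y)\leq G(a,b)=a$ (and its dual). The only cosmetic difference is that you sort the arguments before collapsing the product, whereas the paper absorbs into $a$ directly using $G(a,x_i)=a$ for all $i$; both are valid.
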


\begin{proof}
Clearly, $F$ is quasitrivial, symmetric, and nondecreasing. The fact that it is also associative follows from Theorem~\ref{thm:main2}. To see that $F$ is derived from $G$, let $x_1,\ldots,x_n\in X$ and set $a=\bigwedge_{i=1}^nx_i$ and $b=\bigvee_{i=1}^nx_i$. By quasitriviality of $G$ we have $G(a,b)\in\{a,b\}$. Let us assume that $G(a,b)=a$ (the other case can be dealt with dually). For every $i\in [n]$, we have $G(a,x_i)=a$. Indeed, we have $a=G(a,a)\leq G(a,x_i)\leq G(a,b)=a$. Now, writing $G(x,y)=x\circ y$, we have $x_1\circ\cdots\circ x_n = a = G(a,b) = F(x_1,\ldots,x_n)$, which completes the proof.
\end{proof}

\begin{remark}\label{rem:2e}
The operation $H$ in Definition~\ref{de:FH5} need not be unique. For instance, if $X=\{a,b,c\}$, then the constant ternary operation $F\colon X^3\to X$ defined as $F=a$ is derived from the associative operation $H\colon X^2\to X$ defined as $H(c,c)=b$ and $H(x,y)=a$ for any $(x,y)\neq (c,c)$. However, $F$ is also derived from the constant operation $H'=a$. To give a second example, just consider the associative operations $F(x_1,x_2,x_3)\equiv x_1+x_2+x_3~(\mathrm{mod}~2)$, $H(x_1,x_2)\equiv x_1+x_2~(\mathrm{mod}~2)$, and  $H'(x_1,x_2)\equiv x_1+x_2+1~(\mathrm{mod}~2)$ on $X=\mathbb{Z}_2$. It is clear that $F$ is derived from both $H$ and $H'$. Interestingly, we also observe that $F$ is quasitrivial but neither $H$ nor $H'$ is quasitrivial.
\end{remark}

\begin{proposition}
Assume that the operation $F\colon X^n\to X$ is associative and derived from associative operations $H\colon X^2\to X$ and $H'\colon X^2\to X$. If $H$ and $H'$ are idempotent (resp.\ have the same neutral element), then $H=H'$.
\end{proposition}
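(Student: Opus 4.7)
The plan is to extract $H(x,y)$ and $H'(x,y)$ from $F$ by evaluating $F$ on carefully chosen tuples that collapse to a single binary application under both $H$ and $H'$. Write $x\circ y = H(x,y)$ and $x\bullet y = H'(x,y)$; by associativity, the expressions $x_1\circ\cdots\circ x_n$ and $x_1\bullet\cdots\bullet x_n$ are unambiguous, and by hypothesis both equal $F(x_1,\ldots,x_n)$.

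For the idempotent case, fix arbitrary $x,y\in X$ and consider $F((n-1)\Cdot x,y)$. Expanding via $H$ yields the iterated product of $n-1$ copies of $x$ with one copy of $y$; since $x\circ x=x$, applying associativity and idempotence repeatedly collapses this to $x\circ y = H(x,y)$. The analogous expansion via $H'$ gives $H'(x,y)$. Hence $H(x,y)=F((n-1)\Cdot x,y)=H'(x,y)$ for all $x,y$.

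For the common neutral element case, let $e$ be the shared neutral element of $H$ and $H'$ and consider $F((n-2)\Cdot e,x,y)$ (for $n=2$ there is nothing to prove since $F=H=H'$ by definition). Expanding via $H$, every factor of $e$ may be removed using $e\circ z = z\circ e = z$, leaving $x\circ y=H(x,y)$. The same calculation with $H'$ gives $H'(x,y)$, so $H=H'$.

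There is no real obstacle: once one observes that the tuples $((n-1)\Cdot x,y)$ and $((n-2)\Cdot e,x,y)$ reduce, under the derivation assumption, to a single binary application of the underlying operation, the conclusion is immediate. The example following Remark~\ref{rem:2e} (with $H(x_1,x_2)\equiv x_1+x_2$ and $H'(x_1,x_2)\equiv x_1+x_2+1 \pmod 2$) confirms that some extra hypothesis such as idempotence or a shared neutral element is genuinely needed, since there neither collapse is available.
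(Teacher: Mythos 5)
Your proof is correct and follows essentially the same route as the paper: in the idempotent case the paper likewise evaluates $F((n-1)\Cdot x,y)$ as the $n$-fold product under each of $H$ and $H'$ and collapses the repeated $x$'s by idempotence, and your use of $F((n-2)\Cdot e,x,y)$ is exactly the intended argument for the neutral-element case, which the paper leaves as ``similar.'' No gaps.
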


\begin{proof}
Assume that $H$ and $H'$ are idempotent (the other case can be dealt with similarly) and let us write $H(x,y)=x\circ y$ and $H'(x,y)=x\diamond y$. Then, for any $x,y\in X$ we have
$$
x\circ y ~=~ \underbrace{x\circ\cdots\circ x}_{n-1}\circ{\,} y ~=~ F((n-1)\Cdot x,y) ~=~ \underbrace{x\diamond\cdots\diamond x}_{n-1}\diamond{\,} y ~=~ x\diamond y,
$$
which shows that $H=H'$.
\end{proof}

The following proposition provides a characterization of the class of quasitrivial, symmetric, associative operations $F\colon X^n\to X$ that are derived from quasitrivial and associative binary operations. This result was previously known and established by using algebraic arguments (see, e.g., \cite[Corollary 4.10]{Ack} and the references therein). For the sake of self-containment we provide a very elementary proof.

We first consider a lemma.

\begin{lemma}\label{lemma:ack2}
Assume that the operation $F\colon X^n\to X$ is symmetric, associative, and derived from a surjective (i.e., onto) and associative operation $H\colon X^2\to X$. Then $H$ is symmetric.
\end{lemma}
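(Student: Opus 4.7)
The plan is to reduce the symmetry of $H$ to the symmetry of $F$ by padding any pair $(x,y)\in X^2$ into an $n$-tuple whose $F$-value is exactly $x\circ y$, where I write $x\circ y=H(x,y)$. The case $n=2$ is trivial since then $F=H$, so I will assume $n\geq 3$.

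The key enabling observation is that, thanks to the surjectivity and associativity of $H$, every element of $X$ can be written as a $\circ$-product of exactly $n-1$ factors. Indeed, surjectivity gives $x=a\circ b$ for some $a,b\in X$; applying surjectivity again to $a$ yields $a=a'\circ a''$, and then (using associativity to suppress parentheses) $x=a'\circ a''\circ b$. Iterating this splitting until we reach $n-1$ factors, we obtain elements $u_1,\ldots,u_{n-1}\in X$ such that
$$
x ~=~ u_1\circ u_2\circ\cdots\circ u_{n-1}.
$$

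With this in hand, fix arbitrary $x,y\in X$ and write $x=u_1\circ\cdots\circ u_{n-1}$ as above. Since $F$ is derived from $H$, we have
$$
x\circ y ~=~ u_1\circ\cdots\circ u_{n-1}\circ y ~=~ F(u_1,\ldots,u_{n-1},y).
$$
The symmetry of $F$ then lets us cyclically shift the entry $y$ to the first position:
$$
F(u_1,\ldots,u_{n-1},y) ~=~ F(y,u_1,\ldots,u_{n-1}) ~=~ y\circ u_1\circ\cdots\circ u_{n-1} ~=~ y\circ x.
$$
Hence $x\circ y=y\circ x$, which is the desired symmetry of $H$.

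There is no real obstacle beyond ensuring that the surjectivity-driven padding reaches exactly $n-1$ factors; the hypothesis that $H$ is surjective is used precisely to generate arbitrarily long factorizations, and associativity of $H$ (already assumed) guarantees that the $\circ$-product we write is unambiguous. The step that required the most thought was recognizing that symmetry of $F$ alone is not enough to swap two factors inside a product (since $H$ need not admit cancellation), which is why we encode one of the two factors $x$ as the full $(n-1)$-prefix of the $F$-input rather than trying to commute $x$ and $y$ among companion arguments.
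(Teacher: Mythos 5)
Your proof is correct and is essentially the paper's own argument in mirror image: the paper uses surjectivity of $H$ to factor $y$ into $n-1$ terms and then moves the single entry $x$ by symmetry of $F$, whereas you factor $x$ and move $y$. Both rest on the same two ingredients (surjectivity-driven expansion to an $n$-tuple and a cyclic shift via the symmetry of $F$), so there is nothing further to add.
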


\begin{proof}
Let us write $H(x,y)=x\circ y$ for all $x,y\in X$. For any $x,y\in X$ there exist $y_1,\ldots,y_{n-2}\in X$ and $z_1,\ldots,z_{n-2}\in X$ such that
\begin{eqnarray*}
H(x,y) &=& x\circ y ~=~ x\circ y_1\circ z_1 ~=~ x\circ y_1\circ y_2\circ z_2 ~=~ \cdots\\
&=& x\circ y_1\circ\cdots\circ y_{n-2}\circ z_{n-2} ~=~ F(x,y_1,\ldots,y_{n-2},z_{n-2})\\
&=& F(y_1,\ldots,y_{n-2},z_{n-2},x) ~=~ \cdots ~=~ y\circ x ~=~ H(y,x),
\end{eqnarray*}
which shows that $H$ is symmetric.
\end{proof}

\begin{proposition}\label{prop:ack}
A symmetric associative operation $F\colon X^n\to X$ is derived from a quasitrivial and associative operation $H\colon X^2\to X$ if and only if there exists a linear ordering $\preceq$ on $X$ such that $F$ is the maximum operation on $(X,\preceq)$, i.e.,
\begin{equation}\label{eq:conjugMax}
F(x_1,\ldots,x_n) ~=~ x_1\vee_{\preceq}\cdots\vee_{\preceq} x_n{\,},\qquad x_1,\ldots,x_n\in X.
\end{equation}
\end{proposition}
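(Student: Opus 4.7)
The plan is to prove the two implications separately, and for the nontrivial direction to reduce the $n$-ary problem to the known characterization of quasitrivial symmetric associative \emph{binary} operations as ``max'' operations.

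For the sufficiency (``if'') direction, if $\preceq$ is a linear ordering on $X$ and $F$ is given by \eqref{eq:conjugMax}, then $F$ is plainly quasitrivial, symmetric, and associative, and it is derived from the binary operation $H(x,y)=x\vee_\preceq y$, which in turn is quasitrivial and associative. This direction requires only a direct verification.

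For the necessity (``only if'') direction, I would first observe that since $H$ is quasitrivial it is idempotent, hence surjective, so Lemma~\ref{lemma:ack2} applies and gives that $H$ is symmetric. Thus the whole problem reduces to showing that every quasitrivial, symmetric, associative binary operation $H$ on $X$ is the maximum operation for some linear ordering $\preceq$ on $X$; once this is established, the fact that $F$ is derived from $H$ immediately yields \eqref{eq:conjugMax}.

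To carry out the binary reduction, I would define the relation $\preceq$ on $X$ by
\[
x\preceq y \quad\Longleftrightarrow\quad H(x,y)=y,
\]
and check the four axioms of a linear order. Reflexivity is idempotence $H(x,x)=x$ (which follows from quasitriviality); totality uses $H(x,y)\in\{x,y\}$ together with the symmetry of $H$; antisymmetry again uses symmetry; transitivity is the one place where associativity enters, via
\[
H(x,z)=H(x,H(y,z))=H(H(x,y),z)=H(y,z)=z.
\]
Once $\preceq$ is verified to be a linear order, the definition gives exactly $H(x,y)=x\vee_\preceq y$, and then
\[
F(x_1,\ldots,x_n)=x_1\circ\cdots\circ x_n=x_1\vee_\preceq\cdots\vee_\preceq x_n
\]
because $F$ is derived from $H$.

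The main (minor) obstacle is simply bookkeeping: making sure quasitriviality of $H$ really does give idempotence and hence surjectivity, so that Lemma~\ref{lemma:ack2} is applicable, and making sure that the order-theoretic argument uses symmetry of $H$ (not just of $F$) in the right places. No deep additional tool is needed beyond Lemma~\ref{lemma:ack2} and the elementary order-theoretic verification above.
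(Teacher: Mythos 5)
Your proposal is correct and follows essentially the same route as the paper: it invokes Lemma~\ref{lemma:ack2} to obtain symmetry of $H$ (after noting that quasitriviality implies idempotence, hence surjectivity), defines $\preceq$ by $x\preceq y \Leftrightarrow H(x,y)=y$, and verifies it is a linear order using quasitriviality, symmetry, and associativity of $H$. The only difference is that you spell out the order-axiom checks that the paper leaves as ``easy to see.''
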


\begin{proof}
(Sufficiency) Trivial.

(Necessity)
Since $H$ is quasitrivial, it is idempotent and hence it is also symmetric by Lemma~\ref{lemma:ack2}. Using quasitriviality and associativity of $H$, it is then easy to see that the binary relation $\preceq$ on $X$ defined as
\begin{equation}\label{eq:defLor4}
x ~ \preceq ~ y\quad\Leftrightarrow\quad H(x,y)~=~y{\,},\qquad x,y\in X,
\end{equation}
is a linear order. Using symmetry, we then have $H(x,y)=x\vee_{\preceq}y$ for all $x,y\in X$.
\end{proof}

\begin{remark}
The second example given in Remark~\ref{rem:2e} shows that the quasitriviality of $H$ is necessary in Proposition~\ref{prop:ack}.
\end{remark}

For arbitrary chains $(X,\leq)$ and $(X,\preceq)$, the operations $F\colon X^n\to X$ of the form \eqref{eq:conjugMax} need not be nondecreasing for $\leq$. In the following proposition we characterize the class of linear orderings $\preceq$ on $X$ for which those operations $F$ are nondecreasing for $\leq$.

\begin{definition}\label{de:Black}
Let $(X,\leq)$ and $(X,\preceq)$ be chains. We say that the linear ordering $\preceq$ is \emph{single-peaked for $\leq$} if for any $a,b,c\in X$ such that $a<b<c$ we have $b\prec a$ or $b\prec c$.
\end{definition}

\begin{remark}\label{rem:Bp4}
The concept of single-peaked linear ordering was introduced for finite chains in social choice theory by Black~\cite{Bla48,Bla87} (see \cite{BerPer06,Fou01} for more recent references). Here we simply apply the same definition to arbitrary chains. Thus defined, $\preceq$ is single-peaked for $\leq$ if and only if, from among three distinct elements of $X$, the centrist one (for $\leq$) is never ranked last by $\preceq$.
\end{remark}

\begin{proposition}\label{prop:Bl56}
Let $(X,\leq)$ and $(X,\preceq)$ be chains and let $F\colon X^n\to X$ be of the form \eqref{eq:conjugMax}. Then $F$ is nondecreasing for $\leq$ if and only if $\preceq$ is single-peaked for $\leq$.
\end{proposition}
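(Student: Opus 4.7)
I prove the two implications separately.

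For sufficiency, assume $\preceq$ is single-peaked w.r.t.\ $\leq$. Since the $\preceq$-maximum operation is symmetric, $\leq$-monotonicity of $F$ reduces, by varying one coordinate at a time, to showing that for every fixed $m \in X$ the unary map $f(x) = m \vee_\preceq x$ is $\leq$-nondecreasing (take $m = x_1 \vee_\preceq \cdots \vee_\preceq x_{n-1}$). Given $u \leq v$ in $X$, I split on whether each of $u, v$ is $\succ m$ or $\preceq m$ in $\preceq$. The two ``same side'' cases are immediate: if both $\succ m$ then $f(u) = u \leq v = f(v)$, and if both $\preceq m$ then $f(u) = f(v) = m$. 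The two ``cross'' cases are handled symmetrically; consider $u \succ m$ and $v \preceq m$. Then $v \preceq m \prec u$ forces $v \prec u$ in $\preceq$, hence $u \neq v$ as elements of $X$, hence $u < v$ strictly in $\leq$. Assuming for contradiction that $u > m$, the triple $m < u < v$ is a genuine three-element $\leq$-chain, and Definition~\ref{de:Black} forces $u \prec m$ or $u \prec v$; the first contradicts $u \succ m$, and the second contradicts $v \prec u$. Thus $u \leq m$, and $f(u) = u \leq m = f(v)$.

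For necessity, I argue contrapositively. Suppose $\preceq$ is not single-peaked, and fix a witness $a < b < c$ with $a \prec b$ and $c \prec b$. By linearity of $\preceq$, either $a \prec c$ or $c \prec a$. In the first case, compare $\bfx = (a, (n-1)\Cdot c)$ with $\bfy = (b, (n-1)\Cdot c)$: componentwise $\bfx \leq \bfy$, while $F(\bfx) = a \vee_\preceq c = c$ and $F(\bfy) = b \vee_\preceq c = b$, so $F(\bfx) > F(\bfy)$ in $\leq$. In the second case, compare $\bfx = ((n-1)\Cdot a, b)$ with $\bfy = ((n-1)\Cdot a, c)$: again $\bfx \leq \bfy$, while $F(\bfx) = a \vee_\preceq b = b$ and $F(\bfy) = a \vee_\preceq c = a$, so $F(\bfx) > F(\bfy)$ in $\leq$. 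Either way, $F$ fails to be nondecreasing.

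The main obstacle is the bookkeeping in the sufficiency direction: one must carefully exploit the strict $\preceq$-inequalities to upgrade the given $u \leq v$ to a strict $u < v$, so that $m < u < v$ is a genuine three-element $\leq$-chain on which Definition~\ref{de:Black} can be invoked, and then verify that each disjunct it yields contradicts the standing hypotheses. Once this is cleanly done, the rest amounts to a one-line unpacking of the definition of $F$ as the iterated $\preceq$-maximum.
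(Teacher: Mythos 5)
Your proof is correct and follows essentially the same route as the paper's: both directions come down to a case analysis on the relative positions (w.r.t.\ $\leq$ and $\preceq$) of three elements, after reducing monotonicity of $F$ to that of the binary map $x\mapsto m\vee_{\preceq}x$. The only cosmetic differences are that you prove sufficiency directly where the paper argues by contradiction, and for necessity you split on $a\prec c$ versus $c\prec a$ to exhibit an explicit violation of monotonicity where the paper uses a single squeeze argument combined with quasitriviality.
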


\begin{proof}
(Necessity) We proceed by contradiction. Suppose that there exist $a,b,c\in X$ satisfying $a<b<c$ such that $b\succ a$ and $b\succ c$. Then by nondecreasing monotonicity we clearly have
$$
b ~=~ F(a,(n-1)\Cdot b) ~\leq ~ F(a,(n-1)\Cdot c) ~\leq ~ F(b,(n-1)\Cdot c) ~=~ b
$$
and hence $F(a,(n-1)\Cdot c)=b$, which contradicts quasitriviality.

(Sufficiency) We proceed by contradiction. Suppose that there exist $k\in [n]$ and $(x_1,\ldots,x_n),(x'_1,\ldots,x'_n)\in X^n$ such that $x_k<x'_k$ and $x_i=x'_i$ for all $i\in [n]\setminus\{k\}$ and $F(x_1,\ldots,x_n)>F(x'_1,\ldots,x'_n)$, that is,
$$
x_j\vee_{\preceq} x_k ~ > ~ x_j\vee_{\preceq} x'_k{\,},
$$
where $x_j=x_1\vee_{\preceq}\cdots\vee_{\preceq} x_{k-1}\vee_{\preceq} x_{k+1}\vee_{\preceq}\cdots\vee_{\preceq} x_n$.

We only have two relevant cases to consider.
\begin{itemize}
\item If $x_k\preceq x_j\preceq x'_k$, then we obtain $x_k<x'_k<x_j$.
\item If $x'_k\preceq x_j\preceq x_k$, then we obtain $x_j<x_k<x'_k$.
\end{itemize}
We immediately reach a contradiction since $\preceq$ cannot be single-peaked for $\leq$ in each of these two cases.
\end{proof}

The following two propositions provide characterizations of single-peaked\-ness. Recall first that, for any chain $(X,\leq)$, a subset $C$ of $X$ is said to be \emph{convex for $\leq$} if for any $a,b,c\in X$ such that $a<b<c$ we have that $a,c\in C$ implies $b\in C$.

\begin{proposition}
Let $(X,\leq)$ and $(X,\preceq)$ be chains. Then $\preceq$ is single-peaked for $\leq$ if and only if for every $t\in X$ the set $\{x\in X\mid x\preceq t\}$ is convex for $\leq$.
\end{proposition}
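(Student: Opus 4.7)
The plan is to prove both implications directly from the definitions, using $L_t := \{x\in X\mid x\preceq t\}$ as shorthand for the $\preceq$-initial segment determined by $t$.

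For the forward direction (single-peaked implies each $L_t$ convex w.r.t.\ $\leq$), I would fix $t\in X$, pick $a<b<c$ with $a,c\in L_t$, and show $b\in L_t$. Since $\preceq$ is single-peaked, either $b\prec a$ or $b\prec c$. In the first case $b\prec a\preceq t$, and in the second $b\prec c\preceq t$; by transitivity $b\prec t$, so $b\in L_t$ as required.

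For the converse, I would argue by contradiction. Suppose single-peakedness fails, so there exist $a<b<c$ with $b\succ a$ and $b\succ c$. Let $t$ be the $\preceq$-larger of the two elements $a,c$; then $a\preceq t$ and $c\preceq t$, so $a,c\in L_t$. Convexity of $L_t$ w.r.t.\ $\leq$ forces $b\in L_t$, i.e., $b\preceq t$. But $b\succ a$ and $b\succ c$ together with $t\in\{a,c\}$ yield $b\succ t$, contradicting $b\preceq t$.

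No genuine obstacle arises; the statement is essentially a direct unpacking of single-peakedness in terms of lower sets. The only care needed is to track strict versus non-strict inequalities: the forward step must use the strict relation $b\prec a$ (or $b\prec c$) to chain with $a\preceq t$ and conclude $b\prec t$, and the reverse step uses the fact that $a,b,c$ are pairwise distinct (since $a<b<c$) so that $b\succ a$ and $b\succ c$ indeed give $b\succ t$ for $t\in\{a,c\}$.
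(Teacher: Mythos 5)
Your proof is correct and follows essentially the same route as the paper's: the forward direction is the same chaining of $b\prec a$ (or $b\prec c$) with $a\preceq t$ (merely phrased directly rather than by contradiction), and the converse is exactly the paper's argument with $t=a\vee_{\preceq}c$. No issues.
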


\begin{proof}
(Necessity) We proceed by contradiction. Suppose that there exist $a,b,c,t_0\in X$ satisfying $a<b<c$ such that $a,c\in\{x\in X\mid x\preceq t_0\}$ and $b\notin\{x\in X\mid x\preceq t_0\}$. This means that $a\preceq t_0\prec b$ and $c\preceq t_0\prec b$, a contradiction.

(Sufficiency) We proceed by contradiction. Suppose that there exist $a,b,c\in X$ satisfying $a<b<c$ such that $b\succ a$ and $b\succ c$. Setting $t_0=a\vee_{\preceq} c$, we have $a,c\in\{x\in X\mid x\preceq t_0\}$. By convexity we also have $b\in\{x\in X\mid x\preceq t_0\}$. Therefore $a\vee_{\preceq} c\prec b\preceq t_0=a\vee_{\preceq} c$, a contradiction.
\end{proof}

\begin{proposition}\label{prop:sisd}
Let $(X,\leq)$ and $(X,\preceq)$ be chains. Then $\preceq$ is single-peaked for $\leq$ if and only if for any $x_0,x_1,x_2\in X$ such that $x_0\prec x_1$ and $x_0\prec x_2$ we have
\begin{equation}\label{eq:imp}
x_0<x_1<x_2\quad\text{or}\quad x_2<x_1<x_0\quad\Rightarrow\quad x_1\prec x_2.
\end{equation}
\end{proposition}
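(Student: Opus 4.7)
The plan is to prove both implications by contradiction, in each case using the linearity of $\preceq$ to make a binary case split.

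For necessity, I assume $\preceq$ is single-peaked and suppose the hypotheses of \eqref{eq:imp} hold, say $x_0\prec x_1$, $x_0\prec x_2$, and $x_0<x_1<x_2$ (the case $x_2<x_1<x_0$ is handled in exactly the same way). Single-peakedness applied to the triple $x_0<x_1<x_2$ forces $x_1\prec x_0$ or $x_1\prec x_2$; the first alternative is incompatible with $x_0\prec x_1$, so $x_1\prec x_2$ must hold, which is the desired conclusion.

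For sufficiency, I assume \eqref{eq:imp} and suppose toward a contradiction that $\preceq$ is not single-peaked w.r.t.\ $\leq$. Then there exist $a,b,c\in X$ with $a<b<c$ and $a\prec b$, $c\prec b$. Since $\preceq$ is a linear ordering and $a\neq c$, I can split into the two cases $a\prec c$ and $c\prec a$. In the first case I apply \eqref{eq:imp} to the triple $(x_0,x_1,x_2)=(a,b,c)$: the hypotheses $x_0\prec x_1$, $x_0\prec x_2$, and $x_0<x_1<x_2$ all hold, so the conclusion gives $b\prec c$, contradicting $c\prec b$. In the second case I apply \eqref{eq:imp} to the triple $(x_0,x_1,x_2)=(c,b,a)$: the hypotheses $c\prec b$, $c\prec a$, and $x_2<x_1<x_0$ all hold, so the conclusion gives $b\prec a$, contradicting $a\prec b$. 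Either way we reach a contradiction, so $\preceq$ must be single-peaked.

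There is no serious obstacle here; the only point requiring care is choosing the correct assignment of $(x_0,x_1,x_2)$ in the two sub-cases of the sufficiency argument so that both the ``peak'' condition $x_0\prec x_1$, $x_0\prec x_2$ and the ``monotone'' condition $x_0<x_1<x_2$ or $x_2<x_1<x_0$ are simultaneously satisfied.
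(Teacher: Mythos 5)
Your proof is correct and follows essentially the same route as the paper: the necessity direction is the contrapositive of the paper's argument (using linearity of $\preceq$ to eliminate the alternative $x_1\prec x_0$), and your case split on $a\prec c$ versus $c\prec a$ in the sufficiency direction is exactly the paper's choice $x_0=a\wedge_{\preceq}c$, $x_2=a\vee_{\preceq}c$ written out explicitly. No gaps.
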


\begin{proof}
(Necessity) We proceed by contradiction. Suppose that there exist $x_0,x_1,x_2\in X$ satisfying $x_0\prec x_1$ and $x_0\prec x_2$ for which \eqref{eq:imp} fails to hold, i.e., either ($x_0<x_1<x_2$ and $x_0\prec x_2\prec x_1$) or ($x_2<x_1<x_0$ and $x_0\prec x_2\prec x_1$), which clearly shows that $\preceq$ is not single-peaked for $\leq$.

(Sufficiency) We proceed by contradiction. Suppose that there exist $a,b,c\in X$ satisfying $a<b<c$ such that $b\succ a$ and $b\succ c$. Letting $x_0=a\wedge_{\preceq} c$, $x_1=b$, and $x_2=a\vee_{\preceq} c$, we obtain either ($x_0<x_1<x_2$ and $x_0\prec x_2\prec x_1$) or ($x_2<x_1<x_0$ and $x_0\prec x_2\prec x_1$), which shows that \eqref{eq:imp} fails to hold, a contradiction.
\end{proof}

\begin{corollary}\label{cor:sisd22}
Let $(X,\leq)$ and $(X,\preceq)$ be chains and suppose that $(X,\preceq)$ has a minimal element $x_0$. Then $\preceq$ is single-peaked for $\leq$ if and only if \eqref{eq:imp} holds for all $x_1,x_2\in X$.
\end{corollary}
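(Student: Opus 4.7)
The necessity direction is essentially free from Proposition~\ref{prop:sisd}. If $\preceq$ is single-peaked w.r.t.\ $\leq$, then the proposition yields \eqref{eq:imp} for every triple in which the first coordinate is $\preceq$-dominated by the other two; specializing to our $\preceq$-minimum $x_0$, this hypothesis is automatic whenever $x_1,x_2\neq x_0$, while the degenerate cases $x_1=x_0$ or $x_2=x_0$ make the hypothesis $x_0<x_1<x_2$ or $x_2<x_1<x_0$ of \eqref{eq:imp} vacuously false (a strict $\leq$-inequality between $x_0$ and itself cannot hold).

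For sufficiency I would argue directly by contradiction rather than routing through Proposition~\ref{prop:sisd}. Assume \eqref{eq:imp} for our fixed minimum $x_0$ and all $x_1,x_2\in X$, and suppose $\preceq$ is not single-peaked, so that there exist $a,b,c\in X$ with $a<b<c$, $a\prec b$, and $c\prec b$. The plan is to locate $x_0$ relative to $b$ in the chain $(X,\leq)$ and obtain a contradiction in each of three mutually exhaustive cases: (i) if $x_0<b$, then $x_0<b<c$ and \eqref{eq:imp} applied with $x_1=b$, $x_2=c$ forces $b\prec c$, contradicting $c\prec b$; (ii) if $x_0>b$, then $a<b<x_0$ and \eqref{eq:imp} applied with $x_1=b$, $x_2=a$ forces $b\prec a$, contradicting $a\prec b$; (iii) if $x_0=b$, then the $\preceq$-minimality of $x_0$ together with $a\neq b$ gives $b=x_0\prec a$, again contradicting $a\prec b$.

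No step is genuinely difficult; the only thing to take care of is the correct choice of $(x_1,x_2)$ in \eqref{eq:imp} in each case, reflecting the asymmetry between the two disjuncts of its hypothesis (one yielding $b\prec c$, the other $b\prec a$), and handling the boundary case $x_0=b$ by invoking minimality rather than by instantiating the implication itself.
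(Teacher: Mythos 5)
Your proof is correct and takes essentially the same route as the paper's: necessity is deduced from Proposition~\ref{prop:sisd} (you merely spell out the vacuous cases $x_1=x_0$ or $x_2=x_0$ explicitly), and sufficiency is the same direct contradiction argument, splitting on the position of $x_0$ relative to $b$ in $(X,\leq)$ and instantiating \eqref{eq:imp} with $(x_1,x_2)=(b,c)$ or $(b,a)$ accordingly, with the case $x_0=b$ excluded by $\preceq$-minimality.
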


\begin{proof}
(Necessity) The result follows from Proposition~\ref{prop:sisd}.

(Sufficiency) We proceed by contradiction. Suppose that there exist $a,b,c\in X$ satisfying $a<b<c$ such that $b\succ a$ and $b\succ c$. Since $x_0$ is the minimal element of $(X,\preceq)$, we must have $x_0\neq b$. If $x_0<b$, then setting $x_1=b$ and $x_2=c$, we obtain $x_0<x_1<x_2$ and $x_2\prec x_1$, a contradiction. We arrive at a similar contradiction if $x_0>b$.
\end{proof}

Using Propositions~\ref{prop:MarMayTor}, \ref{prop:ack}, \ref{prop:Bl56}, Theorem~\ref{thm:main2}, and Corollary~\ref{cor:fs5fs}, we easily derive the following theorem.

\begin{theorem}\label{thm:f456dfs}
Let $X$ be a chain and let $F\colon X^n\to X$ be an operation. The following assertions are equivalent.
\begin{enumerate}
\item[(i)] $F$ is quasitrivial, symmetric, nondecreasing, and associative (associativity can be ignored when $n=2$).
\item[(ii)] $F$ is of the form \eqref{eq:FUmm}, where $G\colon X^2\to X$ is quasitrivial, symmetric, and nondecreasing.
\item[(iii)] $F$ is of the form \eqref{eq:conjugMax} for some linear ordering $\preceq$ on $X$ that is single-peaked for $\leq$.
\end{enumerate}
If any of these assertions holds, then $G$ is associative and $F$ is derived from $G$.
\end{theorem}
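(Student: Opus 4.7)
The plan is to prove $(i)\Leftrightarrow(ii)$ and $(ii)\Leftrightarrow(iii)$ by assembling the results already established in this section. The final clause about associativity of $G$ and the derivation of $F$ from $G$ will fall out for free from Proposition~\ref{prop:MarMayTor} and Corollary~\ref{cor:fs5fs} once (ii) is in hand.

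For $(i)\Leftrightarrow (ii)$, when $n\geq 3$ this is precisely the equivalence $(i)\Leftrightarrow(iii)$ of Theorem~\ref{thm:main2}, whose ``moreover'' clause supplies the missing symmetry of $G$. When $n=2$, symmetry of $G$ in (ii) forces $F=G$ via \eqref{eq:FUmm}, so (i) and (ii) coincide as statements, and Proposition~\ref{prop:MarMayTor} confirms that associativity may be safely ignored in (i).

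For $(ii)\Rightarrow(iii)$, I would first invoke Corollary~\ref{cor:fs5fs} to conclude that $F$ is associative and derived from $G$, and Proposition~\ref{prop:MarMayTor} to see that $G$ is itself associative. Since $G$ is also quasitrivial, Proposition~\ref{prop:ack} applies and produces a linear ordering $\preceq$ on $X$ for which $F$ has the form \eqref{eq:conjugMax}. Nondecreasing monotonicity of $F$ w.r.t.\ $\leq$ then forces $\preceq$ to be single-peaked w.r.t.\ $\leq$ by Proposition~\ref{prop:Bl56}.

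For $(iii)\Rightarrow(ii)$, define $G\colon X^2\to X$ by $G(x,y)=x\vee_{\preceq}y$. This $G$ is manifestly quasitrivial and symmetric, and its nondecreasing monotonicity w.r.t.\ $\leq$ follows from Proposition~\ref{prop:Bl56} applied in the case $n=2$. The only point needing verification is \eqref{eq:FUmm}: set $a=\bigwedge_{i=1}^{n}x_i$ and $b=\bigvee_{i=1}^{n}x_i$, and note that single-peakedness of $\preceq$ ensures $x_i\prec a$ or $x_i\prec b$ whenever $a<x_i<b$; hence $x_i\preceq a\vee_{\preceq}b$ for every $i\in[n]$, which gives $F(x_1,\ldots,x_n)=a\vee_{\preceq}b=G(a,b)$. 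This short single-peakedness argument is the only non-bookkeeping ingredient and constitutes the main, though mild, obstacle in the proof.
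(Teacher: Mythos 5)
Your proposal is correct and follows essentially the same route as the paper, which simply assembles Propositions~\ref{prop:MarMayTor}, \ref{prop:ack}, \ref{prop:Bl56}, Theorem~\ref{thm:main2}, and Corollary~\ref{cor:fs5fs} exactly as you do. Your direct single-peakedness computation for $(iii)\Rightarrow(ii)$ is a valid (and correctly executed) way to close the cycle; one could equally go $(iii)\Rightarrow(i)$ via Proposition~\ref{prop:Bl56}, but this is an immaterial difference.
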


We now investigate the particular case where the operations satisfying any of the assertions of Theorem~\ref{thm:main2} have neutral elements.

\begin{proposition}\label{prop:eee}
Let $X$ be a chain and let $e\in X$. Assume that $F\colon X^n\to X$ is of the form \eqref{eq:FUmm}, where $G\colon X^2\to X$ is quasitrivial and symmetric. Then the following assertions are equivalent.
\begin{enumerate}
\item[(i)] $e$ is a neutral element of $F$.
\item[(ii)] $e$ is a neutral element of $G$.
\item[(iii)] The point $(e,e)$ is isolated for $G$.
\item[(iv)] The point $(n\Cdot e)$ is isolated for $F$.
\end{enumerate}
In particular, if $F$ has a neutral element, then it is unique.
\end{proposition}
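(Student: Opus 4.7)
The plan is to prove the cycle of implications $(i)\Rightarrow(ii)\Rightarrow(iii)\Rightarrow(iv)\Rightarrow(i)$, and then deduce the uniqueness statement directly from the equivalence $(i)\Leftrightarrow(ii)$.

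For $(i)\Rightarrow(ii)$, I would apply the neutrality condition of $F$ at position $i=1$: for every $x\in X$, we have $F(x,(n-1)\Cdot e)=x$. Using the form \eqref{eq:FUmm}, this reads $G(e\wedge x,e\vee x)=x$, which by the symmetry of $G$ gives $G(e,x)=G(x,e)=x$ for all $x\in X$; hence $e$ is a neutral element of $G$. The step $(ii)\Rightarrow(iii)$ is exactly the converse part of Lemma~\ref{lemma:ee} applied to the quasitrivial binary operation $G$ (which is known to hold for $n=2$). Similarly, $(iv)\Rightarrow(i)$ is the direct part of Lemma~\ref{lemma:ee} applied to $F$, where one first notes that $F$ inherits quasitriviality from $G$ since $G(a,b)\in\{a,b\}$ and $a,b\in\{x_1,\ldots,x_n\}$.

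The key step that really uses the structure \eqref{eq:FUmm} is $(iii)\Rightarrow(iv)$. Assume $(e,e)$ is isolated for $G$ and suppose $\bfx=(x_1,\ldots,x_n)\in X^n$ satisfies $F(\bfx)=F(n\Cdot e)$. Setting $a=\bigwedge_{i=1}^n x_i$ and $b=\bigvee_{i=1}^n x_i$, formula \eqref{eq:FUmm} gives
\begin{equation*}
G(a,b) ~=~ F(\bfx) ~=~ F(n\Cdot e) ~=~ G(e,e),
\end{equation*}
so isolation of $(e,e)$ for $G$ forces $(a,b)=(e,e)$, i.e., $x_1=\cdots=x_n=e$. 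Hence $(n\Cdot e)$ is isolated for $F$. I expect this to be the only step demanding any thought, since the others reduce to invocations of Lemma~\ref{lemma:ee} or to direct computation with \eqref{eq:FUmm}.

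For the final uniqueness claim, if $e$ and $e'$ are both neutral elements of $F$, then by $(i)\Rightarrow(ii)$ they are both neutral elements of the binary operation $G$, and binary operations admit at most one neutral element (indeed, $e=G(e,e')=e'$). This completes the proof.
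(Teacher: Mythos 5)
Your proposal is correct and follows essentially the same route as the paper: the key step $(iii)\Rightarrow(iv)$ is the same contradiction/contrapositive argument via \eqref{eq:FUmm}, and the steps $(ii)\Leftrightarrow(iii)$ and $(iv)\Rightarrow(i)$ invoke Lemma~\ref{lemma:ee} exactly as the paper does. Organizing the steps as a single cycle and spelling out $(i)\Rightarrow(ii)$ and the uniqueness of a binary neutral element explicitly are only cosmetic differences.
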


\begin{proof}
The equivalence (i) $\Leftrightarrow$ (ii) is straightforward. The equivalence (ii) $\Leftrightarrow$ (iii) and the implication (iv) $\Rightarrow$ (i) follow from Lemma~\ref{lemma:ee}. Let us now prove the implication (iii) $\Rightarrow$ (iv). Suppose that $(e,e)$ is isolated for $G$ and that $(n\Cdot e)$ is not isolated for $F$. Then, there exists $(x_1,\ldots,x_n)\neq (n\Cdot e)$ such that $G(e,e)=F(n\Cdot e)=F(x_1,\ldots,x_n)=G(\bigwedge_{i=1}^nx_i,\bigvee_{i=1}^nx_i)$, which contradicts the assumption that $(e,e)$ is isolated for $G$.
\end{proof}

\begin{remark}\label{rem:Ne6}
We observe that if an operation $F\colon X^n\to X$ is of the form \eqref{eq:conjugMax} for some linear ordering $\preceq$ on $X$, then $F$ has a neutral element $e\in X$ if and only if $(X,\preceq)$ has a minimal element $x_0$. In this case $e=x_0$.
\end{remark}

\begin{example}
Consider the real operation $F\colon [0,1]^n\to [0,1]$ of the form \eqref{eq:FUmm}, where $G\colon [0,1]^2\to [0,1]$ is defined as $G(x_1,x_2)=x_1\vee x_2$, if $x_1,x_2>0$, and $G(x_1,x_2)=0$, otherwise. It is easy to see that $F$ satisfies the conditions of assertion (ii) of Theorem~\ref{thm:f456dfs} and that $G$ does not have any neutral element. It follows that $F$ has no neutral element either by Proposition~\ref{prop:eee}. By Proposition~\ref{prop:ack} and \eqref{eq:defLor4} we see that $F$ is of the form \eqref{eq:conjugMax}, where $\preceq$ is the single-peaked linear ordering on $[0,1]$ defined as
$$
x ~ \preceq ~ y \quad\Leftrightarrow\quad 0<x\leq y\quad\text{or}\quad y=0{\,},\qquad x,y\in [0,1].
$$
From this representation it is easy to see that $F$ can also be expressed as
$$
F(x_1,\ldots,x_n) ~=~ \textstyle{f(\bigvee_{i=1}^nf^{-1}(x_i))},\qquad x_1,\ldots,x_n\in [0,1],
$$
where $f\colon\left]0,1\right]\cup\{2\}\to [0,1]$ is defined as $f(y)=y$, if $y\in\left]0,1\right]$, and $f(2)=0$.
\end{example}

\begin{example}
Consider the real operation $F\colon [0,1]^2\to [0,1]$ defined as $F(x,y)=x\vee y$, if $x+y\geq 1$, and $F(x,y)=x\wedge y$, otherwise. It is easy to see that $F$ satisfies the conditions of assertion (ii) of Theorem~\ref{thm:f456dfs} (with $G=F$) and that it has the neutral element $e=\frac{1}{2}$. By Proposition~\ref{prop:ack} and \eqref{eq:defLor4} we see that $F$ is of the form \eqref{eq:conjugMax}, where $\preceq$ is the single-peaked linear ordering on $[0,1]$ defined as
$$
x ~ \preceq ~ y \quad\Leftrightarrow\quad (y\leq x<1-y~\text{ or }~1-y\leq x\leq y){\,},\qquad x,y\in [0,1].
$$
Interestingly, we observe that for every $x\in [0,1]$, there is no $y\in [0,1]$ such that $x\prec y\prec 1-x$ or $1-x\prec y\prec x$. From this observation we can show that the chain $([0,1],\preceq)$ cannot be embedded into the real chain $(\R,\leq)$. Indeed, suppose on the contrary that there exits an injective (i.e., one-to-one) map $g\colon ([0,1],\preceq)\to (\R,\leq)$ such that $x\preceq y$ if and only if $g(x)\leq g(y)$ for all $x,y\in [0,1]$. For any $x\in [0,1]\setminus\{\frac{1}{2}\}$ we have $x\neq 1-x$ and hence $g(x)\neq g(1-x)$. Let $I_x$ denote the real open interval having $g(x)$ and $g(1-x)$ as endpoints. We can always pick a rational number $q_x$ in $I_x$. For any distinct $x,y\in [0,1]\setminus\{\frac{1}{2}\}$ we have $I_x\cap I_y=\varnothing$ and hence $q_x\neq q_y$. It follows that the set $\{q_x\mid x\in [0,1]\setminus\{\frac{1}{2}\}\}$ is uncountable, a contradiction.
\end{example}

Recall that a \emph{uninorm} on a chain $X$ is a binary operation $U\colon X^2\to X$ that is associative, symmetric, nondecreasing, and has a neutral element (see \cite{DeB99,YagRyb96}). It is noteworthy that any idempotent uninorm is quasitrivial. This fact can be easily verified by following the first few steps of the proof of \cite[Theorem~3]{CzoDre84}, which actually hold on any chain.

The concept of uninorm can be easily extended to $n$-ary operations as follows.

\begin{definition}[{see \cite{KisSom}}]
Let $X$ be a chain. An \emph{$n$-ary uninorm} is an operation $F\colon X^n\to X$ that is associative, symmetric, nondecreasing, and has a neutral element.
\end{definition}

\begin{lemma}[{see \cite{DudMuk06}}]\label{lemma:Dud3}
If $F\colon X^n\to X$ is associative and has a neutral element $e\in X$, then $F$ is derived from the associative operation $H\colon X^2\to X$ defined as $H(x,y)=F(x,(n-2)\Cdot e,y)$.
\end{lemma}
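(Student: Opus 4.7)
My plan is to prove the lemma in two stages: (a) showing that the proposed binary operation $H$ is associative, and (b) establishing that $F$ coincides with the iterated binary product induced by $H$. Both stages rely on the classical generalized associativity for $n$-ary semigroups, which states that any two parenthesizations of $F$ applied to a sequence of length $k(n-1)+1$ yield the same element.

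For stage (a), I would consider the length-$(2n-1)$ sequence $(x,(n-2)\Cdot e,y,(n-2)\Cdot e,z)$. Grouping the first $n$ positions into an inner $F$ yields $H(H(x,y),z)$, while grouping the last $n$ positions yields $H(x,H(y,z))$. The $n$-ary associativity hypothesis on $F$ forces these two groupings to coincide, so $H$ is associative.

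For stage (b), I would apply generalized associativity to the sequence
$$
\mathbf{s} \;=\; (x_1,(n-2)\Cdot e,x_2,(n-2)\Cdot e,\ldots,(n-2)\Cdot e,x_n)
$$
of length $(n-1)^2+1$, which corresponds to $k=n-1$ nested applications of $F$. The left-to-right parenthesization (contracting the leftmost $n$ positions at each step) reproduces the left-associated $H$-product $H(H(\cdots H(x_1,x_2),x_3),\ldots,x_n)$. In a second parenthesization, I would first contract, for each $i\in\{2,\ldots,n-1\}$, a block of $n$ consecutive positions formed by $x_i$ surrounded by $n-1$ copies of $e$; by the neutrality identity $F((j-1)\Cdot e,x_i,(n-j)\Cdot e)=x_i$, each such block collapses to $x_i$. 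After these $n-2$ inner contractions the sequence reduces to $(x_1,x_2,\ldots,x_n)$, and one final contraction returns $F(x_1,\ldots,x_n)$. Generalized associativity then identifies the two outcomes, which is exactly the statement that $F$ is derived from $H$.

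The main bookkeeping obstacle is partitioning the interior positions of $\mathbf{s}$ into $n-2$ disjoint blocks of $n$ consecutive positions, one around each interior $x_i$. The counts balance: $\mathbf{s}$ contains $(n-1)(n-2)$ copies of $e$ arranged in $n-1$ gaps of size $n-2$, while the $n-2$ inner collapses consume exactly $(n-1)(n-2)$ copies of $e$. A concrete assignment is to let the block around $x_i$ absorb the $n-i$ $e$'s still free on its left and the first $i-1$ $e$'s on its right, producing the required disjoint consecutive blocks and allowing the alternative parenthesization to be executed.
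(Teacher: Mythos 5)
Your argument is correct. Note first that the paper itself gives no proof of this lemma; it is simply attributed to Dudek and Mukhin \cite{DudMuk06}, so there is no in-paper argument to compare against, and what you have written is a legitimate self-contained derivation. Stage (a) is fine: chaining the defining associativity identities for $i=1,\ldots,n-1$ moves the inner $F$-block from the first $n$ positions of $(x,(n-2)\Cdot e,y,(n-2)\Cdot e,z)$ to the last $n$, giving $H(H(x,y),z)=H(x,H(y,z))$. Stage (b) is also sound, and your bookkeeping checks out: the block around $x_i$ consumes the last $n-i$ copies of $e$ in the gap to its left and the first $i-1$ in the gap to its right, these blocks are pairwise disjoint, consist of consecutive positions, exactly tile the $(n-1)^2-1$ interior positions, and each collapses to $x_i$ by the neutrality identity with $j=n-i+1$; since the blocks are disjoint they remain consecutive after each contraction, so this is a legal parenthesization. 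The one ingredient you take on faith is the generalized associativity law for $n$-ary semigroups (all complete parenthesizations of a word of length $k(n-1)+1$ agree). This is classical (it goes back to D\"ornte and Post and is the $n$-ary analogue of the general associativity law for semigroups), but it is itself a nontrivial induction; in a fully self-contained write-up you should either prove it or cite it explicitly, since it is doing essentially all the work in stage (b). With that caveat, the proof is complete.
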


\begin{corollary}\label{cor:fs6}
Let $X$ be a chain. Any idempotent $n$-ary uninorm $F\colon X^n\to X$ satisfies the assertions of Theorem~\ref{thm:f456dfs}. In particular,
\begin{itemize}
\item $F$ has a unique neutral element $e$;
\item $F((n-1)\Cdot x,y) = F(x,(n-1)\Cdot y) = F(x,(n-2)\Cdot e,y)$ for all $x,y\in X$;
\item the operation $G$ in assertion (ii) is an idempotent uninorm with $e$ as the neutral element;
\item the chain $(X,\preceq)$ in assertion (iii) has the minimal element $e$.
\end{itemize}

\end{corollary}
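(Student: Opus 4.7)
The plan is to reduce $F$ to a binary operation via Lemma~\ref{lemma:Dud3}, verify that this binary operation is an idempotent uninorm (hence quasitrivial by the observation recalled just before Definition~\ref{de:FH5}... actually, just before the $n$-ary uninorm definition), and then invoke Theorem~\ref{thm:f456dfs}. First, since $F$ is associative with neutral element $e$, Lemma~\ref{lemma:Dud3} supplies an associative binary operation $H(x,y)=F(x,(n-2)\Cdot e,y)$ from which $F$ is derived. The symmetry, nondecreasing monotonicity, and the neutrality of $e$ for $H$ are all inherited from $F$ by direct inspection, so $H$ is a binary uninorm.

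The main obstacle is upgrading idempotence of $F$ to idempotence of $H$: only the identity $F(n\Cdot x)=x$ is given, and one has to extract from it the equation $H(x,x)=x$. Writing $\circ$ for $H$ and $a_k=\underbrace{x\circ\cdots\circ x}_{k}$, I have $a_n=F(n\Cdot x)=x$. If $x\geq e$, then neutrality of $e$ together with nondecreasing monotonicity yield $a_{k+1}=a_k\circ x\geq a_k\circ e=a_k$, so the chain $x=a_1\leq a_2\leq\cdots\leq a_n=x$ collapses and in particular $H(x,x)=a_2=x$; the case $x\leq e$ is dual. Hence $H$ is an idempotent binary uninorm, and is therefore quasitrivial by the observation recalled in the excerpt. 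Since $F(x_1,\ldots,x_n)=x_1\circ\cdots\circ x_n$, quasitriviality of $F$ follows by a trivial induction on $n$ from the fact that $x\circ y\in\{x,y\}$. Thus $F$ satisfies assertion~(i) of Theorem~\ref{thm:f456dfs}, whence also (ii) and (iii).

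It remains to derive the four bullet points. Uniqueness of the neutral element $e$ of $F$ is the concluding assertion of Proposition~\ref{prop:eee}. The equality $F((n-1)\Cdot x,y)=F(x,(n-1)\Cdot y)$ is assertion~(ii) of Theorem~\ref{thm:main2}, while the third member $F(x,(n-2)\Cdot e,y)$ equals $x\circ y$ by neutrality of $e$ for $H$, and $F((n-1)\Cdot x,y)=\underbrace{x\circ\cdots\circ x}_{n-1}\circ y$ equals $x\circ y$ by the idempotence of $H$ just established. The operation $G$ produced by Theorem~\ref{thm:main2}(iii) is quasitrivial, symmetric, nondecreasing, and associative; Proposition~\ref{prop:eee} identifies $e$ as its neutral element, and quasitriviality forces idempotence, so $G$ is an idempotent binary uninorm with $e$ as its neutral element. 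Finally, the minimality of $e$ in $(X,\preceq)$ is exactly the content of Remark~\ref{rem:Ne6}.
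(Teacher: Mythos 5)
Your proposal is correct and follows essentially the same route as the paper: reduce $F$ to the binary operation $H(x,y)=F(x,(n-2)\Cdot e,y)$ via Lemma~\ref{lemma:Dud3}, establish idempotence of $H$ by a monotone chain of powers of $x$ terminating at $F(n\Cdot x)=x$, deduce quasitriviality of $H$ (hence of $F$) from the recalled fact about idempotent uninorms, and then invoke Theorem~\ref{thm:f456dfs}, Proposition~\ref{prop:eee}, and Remark~\ref{rem:Ne6}. The only difference is cosmetic: the paper proves idempotence of $H$ by contradiction from $H(x,x)=z\neq x$ without using the neutral element, whereas you split into the cases $x\geq e$ and $x\leq e$ and use neutrality to make the chain monotone; both arguments are valid.
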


\begin{proof}
By Lemma~\ref{lemma:Dud3}, $F$ is derived from the associative operation $H\colon X^2\to X$ defined as $H(x,y)=x\circ y=F(x,(n-2)\Cdot e,y)$. By definition, $H$ is associative, symmetric, nondecreasing, and has a neutral element. We now show that it is also idempotent. Although this property immediately follows from \cite[Lemma~3.5]{KisSom} we present here an alternative and very simple proof of it. Suppose that $H(x,x)=z\neq x$. If $x<z$ (the case $x>z$ is similar), then by nondecreasing monotonicity of $H$ we have
\begin{eqnarray*}
x & < & z ~=~ x\circ x ~\leq ~ z\circ x~=~ x\circ x\circ x ~\leq ~ z\circ x\circ x ~=~ x\circ x\circ x\circ x ~\leq ~ \cdots\\
&=& \underbrace{x\circ\cdots\circ x}_n ~=~ F(x,\ldots,x) ~=~ x,
\end{eqnarray*}
a contradiction. Therefore $H$ is an idempotent uninorm and hence it is quasitrivial (as observed above). It follows that $F$ is quasitrivial and hence satisfies assertion (i) of Theorem~\ref{thm:f456dfs}. Also, we have
$$
F(x,(n-2)\Cdot e,y) ~=~ x\circ y ~=~ \underbrace{x\circ\cdots\circ x}_{n-1}\circ{\,} y ~=~ x\circ\underbrace{y\circ\cdots\circ y}_{n-1}.
$$
The rest of the corollary follows from Proposition~\ref{prop:eee} and Remark~\ref{rem:Ne6}.
\end{proof}

\begin{corollary}\label{cor:fs62}
Let $X$ be a chain and let $F\colon X^n\to X$ be an operation. Then $F$ is an idempotent $n$-ary uninorm if and only if there exists an idempotent uninorm $U\colon X^2\to X$ such that
$$
F(x_1,\ldots,x_n) ~=~ \textstyle{U(\bigwedge_{i=1}^nx_i{\,},\bigvee_{i=1}^nx_i)},\qquad x_1,\ldots,x_n\in X.
$$
In this case, the uninorm $U$ is uniquely defined as $U(x,y)=F((n-1)\Cdot x,y)$.
\end{corollary}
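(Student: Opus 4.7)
The plan is to obtain both directions by assembling the characterizations already proved, so the argument amounts to invoking the right statement at the right moment rather than doing any new computation.

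For the necessity, I would apply Corollary~\ref{cor:fs6}: if $F$ is an idempotent $n$-ary uninorm, then $F$ satisfies the assertions of Theorem~\ref{thm:f456dfs}, and in particular $F$ is of the form \eqref{eq:FUmm} for some quasitrivial, symmetric, nondecreasing operation $G\colon X^2\to X$ which, by the third bullet of Corollary~\ref{cor:fs6}, is itself an idempotent uninorm (sharing the neutral element $e$ of $F$). Taking $U=G$ yields the required representation.

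For the sufficiency, assume that $F(x_1,\ldots,x_n)=U(\bigwedge_{i=1}^n x_i,\bigvee_{i=1}^n x_i)$ for some idempotent uninorm $U$. Because any idempotent uninorm is quasitrivial (the remark preceding the definition of an $n$-ary uninorm), $U$ is quasitrivial, symmetric, and nondecreasing, so Corollary~\ref{cor:fs5fs} forces $F$ to be quasitrivial, symmetric, nondecreasing, and associative (and derived from $U$). Moreover, $U$ has a neutral element $e$, and Proposition~\ref{prop:eee} (applied with $G=U$) then gives that $e$ is also a neutral element of $F$. Thus $F$ is associative, symmetric, nondecreasing, and has a neutral element, i.e.\ an $n$-ary uninorm; quasitriviality in particular implies idempotence, so $F$ is an idempotent $n$-ary uninorm.

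Finally, for the uniqueness of $U$, I would specialize the representation by taking $x_1=\cdots=x_{n-1}=x$ and $x_n=y$, which gives $F((n-1)\Cdot x,y)=U(x\wedge y,x\vee y)$, and then use the symmetry of $U$ to rewrite $U(x\wedge y,x\vee y)=U(x,y)$. Since there is no computational step and every ingredient is already available in the excerpt, I do not foresee any real obstacle; the only point that deserves a short sentence is the explicit invocation of Proposition~\ref{prop:eee} to transfer the neutral element from $U$ to $F$ in the sufficiency part.
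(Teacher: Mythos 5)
Your proposal is correct and follows exactly the route the paper intends for this corollary (which it states without an explicit proof as an immediate consequence of Corollary~\ref{cor:fs6} for necessity, and of Corollary~\ref{cor:fs5fs} together with Proposition~\ref{prop:eee} for sufficiency); the uniqueness argument via $F((n-1)\Cdot x,y)=U(x\wedge y,x\vee y)=U(x,y)$ is also the standard one. No gaps.
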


\begin{remark}
The results presented in this section strongly rely on the symmetry of the operations $F\colon X^n\to X$. The generalization of these results to the nonsymmetric case is a topic of ongoing research. On this issue, partial results can be found, e.g., in \cite[Lemma 3.15]{KisSom}.
\end{remark}

\section{Bisymmetric operations}

In this section we investigate bisymmetric $n$-ary operations and derive a few equivalences involving associativity and bisymmetry. For instance we show that if an $n$-ary operation has a neutral element, then it is bisymmetric if and only if it is associative and symmetric. Also, if an $n$-ary operation is quasitrivial and symmetric, then it is associative if and only if it is bisymmetric. In particular this latter observation enables us to replace associativity with bisymmetry in Theorems~\ref{thm:main2}, \ref{thm:f456dfs}, and Corollary~\ref{cor:fs5fs}.

\begin{lemma}[{see \cite{CouDevMar}}]\label{lemma:bis}
Let $F\colon X^2\to X$ be an operation. Then the following assertions hold.
\begin{enumerate}
\item[(a)] If $F$ is bisymmetric and has a neutral element, then it is associative and symmetric.
\item[(b)] If $F$ is associative and symmetric, then it is bisymmetric.
\item[(c)] If $F$ is bisymmetric and quasitrivial, then it is associative.
\end{enumerate}
\end{lemma}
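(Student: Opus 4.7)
The plan is to treat the three parts separately. Parts (a) and (b) are quick algebraic manipulations of the bisymmetry identity, while part (c) requires a small case analysis powered by quasitriviality. For part (a), I would instantiate the bisymmetry relation $F(F(a,b),F(c,d)) = F(F(a,c),F(b,d))$ at arguments involving the neutral element $e$: setting $a = d = e$ collapses the two sides to $F(b,c)$ and $F(c,b)$ respectively, yielding symmetry, while setting $c = e$ collapses the identity to $F(F(a,b),d) = F(a,F(b,d))$, yielding associativity. For part (b), I would exploit the fact that associativity and symmetry allow any parenthesized four-term product to be rewritten as the canonical word $F(a,F(b,F(c,d)))$ up to permutation of the factors; two associativity steps reduce $F(F(a,b),F(c,d))$ to this canonical form, one application of symmetry swaps $b$ and $c$ inside the word, and re-associating then produces $F(F(a,c),F(b,d))$.

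For part (c), I would first observe that quasitriviality implies idempotency, and then substitute $a = b$ (respectively $c = d$) into bisymmetry to obtain the two self-distributivity identities
\begin{equation*}
F(a,F(c,d)) = F(F(a,c),F(a,d)),\qquad F(F(a,b),c) = F(F(a,c),F(b,c)).
\end{equation*}
To establish $F(F(a,b),c) = F(a,F(b,c))$, I would run a four-case analysis on the quasitrivial values $F(a,b)\in\{a,b\}$ and $F(b,c)\in\{b,c\}$. The two \emph{matching} cases $(F(a,b)=a,\,F(b,c)=c)$ and $(F(a,b)=b,\,F(b,c)=b)$ give the desired equality on sight. In each of the two \emph{mismatched} cases, applying the appropriate self-distributivity identity reduces the problem to determining the value of $F(a,c)\in\{a,c\}$; the two options are then compared against the distributivity identity and idempotency to pin down the value that makes both sides of the associativity equation agree.

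The main obstacle will be these two mismatched cases. In each one, the ``wrong'' choice of $F(a,c)$ forces $a = c$ via the distributivity identity and idempotency, and then the three arguments collapse so that associativity holds trivially. No other step looks delicate.
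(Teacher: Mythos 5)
Your proposal is correct. Note that the paper itself gives no proof of this lemma; it simply defers to \cite{CouDevMar}, so there is no in-paper argument to compare against, and your write-up amounts to a complete self-contained proof. Parts (a) and (b) are exactly the standard substitutions and check out: $a=d=e$ gives symmetry, $c=e$ gives associativity, and the chain $F(F(a,b),F(c,d))=F(a,F(b,F(c,d)))=F(a,F(F(c,b),d))=F(F(a,c),F(b,d))$ gives (b). For part (c), the delicate point is indeed the two mismatched cases, and your resolution works: in the case $F(a,b)=a$, $F(b,c)=b$, left self-distributivity applied to $F(a,F(b,c))$ yields $F(a,b)=F(F(a,b),F(a,c))=F(a,F(a,c))$, so the hypothesis $F(a,c)=c$ forces $a=c$, whence $F(a,c)=a$ by idempotency in either branch; in the case $F(a,b)=b$, $F(b,c)=c$, right self-distributivity gives $F(b,c)=F(F(a,c),F(b,c))$, so $F(a,c)=a$ with $a\neq c$ yields $c=F(a,c)=a$, a contradiction. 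One small caution worth making explicit in a final write-up: since symmetry is not available in part (c), you must be careful never to silently replace $F(y,x)$ by $F(x,y)$; your argument as sketched does not do so, but the ``three arguments collapse'' phrasing should be replaced by the direct computation above, which is cleaner and avoids any appeal to commutativity.
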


\begin{definition}
We say that a function $F\colon X^n\to X$ is \emph{ultrabisymmetric} if
$$
F(F(\mathbf{r}_1),\ldots,F(\mathbf{r}_n)) ~=~ F(F(\mathbf{r}'_1),\ldots,F(\mathbf{r}'_n))
$$
for all $n\times n$ matrices $[\mathbf{r}_1 ~\cdots ~\mathbf{r}_n]^T,[\mathbf{r}'_1 ~\cdots ~\mathbf{r}'_n]^T\in X^{n\times n}$, where $[\mathbf{r}'_1 ~\cdots ~\mathbf{r}'_n]^T$ is obtained from $[\mathbf{r}_1 ~\cdots ~\mathbf{r}_n]^T$ by exchanging two entries only.
\end{definition}

\begin{proposition}\label{prop:ds5}
Let $F\colon X^n\to X$ be an operation. If $F$ is ultrabisymmetric, then it is bisymmetric. The converse holds whenever $F$ is symmetric.
\end{proposition}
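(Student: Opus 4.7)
My plan is to treat the two directions separately, with the key observation being that the transposition of an $n\times n$ matrix is a permutation of its $n^2$ entries, and hence can be factored into a product of transpositions of pairs of entries.

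For the forward implication, I would argue as follows. Fix any $n\times n$ matrix $M$ with rows $\mathbf{r}_1,\ldots,\mathbf{r}_n$ and columns $\mathbf{c}_1,\ldots,\mathbf{c}_n$. The transpose $M^T$ has rows $\mathbf{c}_1,\ldots,\mathbf{c}_n$, and going from $M$ to $M^T$ amounts to applying a permutation of the $n^2$ positions of the matrix. Since the symmetric group on $n^2$ letters is generated by transpositions, we can connect $M$ to $M^T$ through a finite sequence $M=M^{(0)},M^{(1)},\ldots,M^{(N)}=M^T$, where each $M^{(s+1)}$ is obtained from $M^{(s)}$ by exchanging two entries only. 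Applying ultrabisymmetry to each step, the value $F(F(\mathbf{r}^{(s)}_1),\ldots,F(\mathbf{r}^{(s)}_n))$ is preserved along the chain, which gives exactly $F(F(\mathbf{r}_1),\ldots,F(\mathbf{r}_n))=F(F(\mathbf{c}_1),\ldots,F(\mathbf{c}_n))$, i.e., bisymmetry.

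For the converse, assume $F$ is symmetric and bisymmetric, and let $M,M'$ be two matrices that differ by swapping the entries at positions $(i,j)$ and $(k,l)$. I would distinguish three cases. If $i=k$, then $\mathbf{r}'_i$ is a permutation of $\mathbf{r}_i$ and all other rows coincide, so symmetry of $F$ applied to row $i$ gives the desired equality. If $j=l$, I use bisymmetry to rewrite both $F(F(\mathbf{r}_1),\ldots,F(\mathbf{r}_n))$ and $F(F(\mathbf{r}'_1),\ldots,F(\mathbf{r}'_n))$ in terms of columns; only column $j$ changes and does so by a transposition, so symmetry of $F$ applied to that column concludes this case. The remaining situation is where $i\neq k$ and $j\neq l$.

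The main obstacle is this last case, since a direct appeal to symmetry is no longer available. The key idea is to realize the swap of positions $(i,j)$ and $(k,l)$ as a composition of three swaps, each of which lies either in a single row or in a single column. A routine check shows that performing, in order, the swaps $(i,j)\leftrightarrow(i,l)$, then $(i,l)\leftrightarrow(k,l)$, then $(i,j)\leftrightarrow(i,l)$ produces exactly the matrix $M'$ from $M$. Since each of these three swaps is handled by one of the first two cases, ultrabisymmetry follows by chaining the three equalities. Once this decomposition is pinned down, the argument closes immediately.
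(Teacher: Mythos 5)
Your proof is correct and takes essentially the same route as the paper's: the forward direction realizes the passage from a matrix to its transpose as a chain of two-entry swaps, and the converse handles within-row and within-column swaps via symmetry (combined with bisymmetry for columns) and then decomposes a general swap of the $(i,j)$- and $(k,l)$-entries into the same three row/column swaps the paper uses. The only cosmetic difference is that the paper states the intermediate invariance for arbitrary permutations of a row or column, whereas you only invoke the transposition case, which is all that is needed.
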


\begin{proof}
We immediately see that any ultrabisymmetric operation is bisymmetric (just apply ultrabisymmetry repeatedly to exchange the $(i,j)$- and $(j,i)$-entries for all $i,j\in [n]$).

Now suppose that $F\colon X^n\to X$ is symmetric and bisymmetric. Then we have
$$
F(F(\mathbf{r}_1),\ldots,F(\mathbf{r}_n)) ~=~ F(F(\mathbf{r}'_1),\ldots,F(\mathbf{r}'_n))
$$
for all matrices $[\mathbf{r}_1 ~\cdots ~\mathbf{r}_n]^T,[\mathbf{r}'_1 ~\cdots ~\mathbf{r}'_n]^T\in X^{n\times n}$, where $[\mathbf{r}'_1 ~\cdots ~\mathbf{r}'_n]^T$ is obtained from $[\mathbf{r}_1 ~\cdots ~\mathbf{r}_n]^T$ by permuting the entries of any column or any row. By applying three times this property, we can easily exchange two arbitrary entries of the matrix. Indeed, exchanging the $(i,j)$- and $(k,l)$-entries can be performed through the following three steps: exchange the $(i,j)$- and $(i,l)$-entries in row $i$, exchange the $(i,l)$- and $(k,l)$-entries in column $l$, and exchange the $(i,j)$- and $(i,l)$-entries in row $i$.
\end{proof}

\begin{remark}
\begin{enumerate}
\item[(a)] The symmetry property is necessary in Proposition~\ref{prop:ds5}. Indeed, for any $k\in [n]$, the $k$th projection operation $F\colon X^n\to X$ defined as $F(x_1,\ldots,x_n)=x_k$ is bisymmetric but not ultrabisymmetric.
\item[(b)] An ultrabisymmetric operation need not be symmetric. For instance, consider the operation $F\colon X^2\to X$, where $X=\{a,b,c\}$, defined by $F(a,c)=a$ and $F(x,y)=b$ for every $(x,y)\neq (a,c)$. Clearly, this operation is not symmetric. However, it is ultrabisymmetric since $F(F(x,y),F(u,v))=b$ for all $x,y,u,v\in X$.
\end{enumerate}
\end{remark}

\begin{lemma}\label{lemma:surj65}
If $F\colon X^n\to X$ is surjective (i.e., onto) and ultrabisymmetric, then it is symmetric.
\end{lemma}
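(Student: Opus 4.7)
The plan is to reduce symmetry of $F$ to the exchange of two arbitrary inputs, and to realize such an exchange via a sequence of entry-swaps in an auxiliary matrix, using surjectivity to produce that matrix.

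Concretely, I would first observe that it suffices to show $F$ is invariant under any transposition of two of its arguments, since transpositions generate the full symmetric group. So fix $x_1,\ldots,x_n\in X$ and indices $i\neq j$ in $[n]$; the goal is
\[
F(x_1,\ldots,x_i,\ldots,x_j,\ldots,x_n) ~=~ F(x_1,\ldots,x_j,\ldots,x_i,\ldots,x_n).
\]

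Next, I would invoke surjectivity of $F$: for each $k\in[n]$ there exists a row $\mathbf{r}_k\in X^n$ with $F(\mathbf{r}_k)=x_k$. Form the matrix $M=[\mathbf{r}_1~\cdots~\mathbf{r}_n]^T\in X^{n\times n}$, so that $F(F(\mathbf{r}_1),\ldots,F(\mathbf{r}_n))=F(x_1,\ldots,x_n)$. Let $M'$ be the matrix obtained from $M$ by interchanging rows $i$ and $j$; then $F(F(\mathbf{r}'_1),\ldots,F(\mathbf{r}'_n))$ equals the right-hand side displayed above.

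The crux is then to pass from $M$ to $M'$ through single-entry exchanges. But swapping rows $i$ and $j$ can be realized as the composition of the $n$ entry-exchanges $(i,\ell)\leftrightarrow(j,\ell)$ for $\ell=1,\ldots,n$, each of which is precisely the kind of move permitted by ultrabisymmetry. Applying ultrabisymmetry $n$ times along this sequence yields the desired equality, and hence the symmetry of $F$.

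I do not anticipate any real obstacle here: the only subtlety is noticing that row-exchange is a finite composition of entry-exchanges, which is exactly what ultrabisymmetry was designed to control. Surjectivity is used in one, essential way—to encode the given tuple $(x_1,\ldots,x_n)$ as a vector of row-values $F(\mathbf{r}_k)$—so the hypothesis cannot be dropped (consistent with part (a) of the preceding remark, where the projection example is symmetric but the non-symmetric example $F$ on $\{a,b,c\}$ from part (b) is not surjective).
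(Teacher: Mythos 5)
Your proof is correct and follows essentially the same route as the paper's: use surjectivity to realize $(x_1,\ldots,x_n)$ as the row-values $F(\mathbf{r}_k)$ of a matrix, then observe that swapping rows $i$ and $j$ is a composition of $n$ single-entry exchanges, each of which leaves $F(F(\mathbf{r}_1),\ldots,F(\mathbf{r}_n))$ unchanged by ultrabisymmetry. (The paper compresses this into one line; your only slip is in the closing aside, where you call the projection example ``symmetric''---it is surjective but neither symmetric nor ultrabisymmetric---but this does not affect the argument.)
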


\begin{proof}
Let $x_1,\ldots,x_n\in X$. Then there exists a matrix $[\mathbf{r}_1 ~\cdots ~\mathbf{r}_n]^T\in X^{n\times n}$ such that $x_i=F(\mathbf{r}_i)$ for $i=1,\ldots,n$. By ultrabisymmetry,
$$
F(x_1,\ldots,x_n) ~=~ F(F(\mathbf{r}_1),\ldots,F(\mathbf{r}_n))
$$
is symmetric in $x_1,\ldots,x_n$.
\end{proof}

\begin{remark}\label{rem:surj65}
We observe that if $F\colon X^n\to X$ is idempotent or quasitrivial or has a neutral element, then it is surjective.
\end{remark}

\begin{lemma}\label{lemma:cons65}
If $F\colon X^n\to X$ is quasitrivial, then for any $x,y\in X$, there exists $k\in [n]$ such that
$$
F((k-1)\Cdot x,(n-k+1)\Cdot y)~=~y\quad\text{and}\quad F(k\Cdot x,(n-k)\Cdot y)~=~x.
$$
\end{lemma}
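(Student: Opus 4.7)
The natural approach is a discrete intermediate--value argument along the sequence
$$
a_j ~=~ F(j\Cdot x, (n-j)\Cdot y),\qquad j=0,1,\ldots,n.
$$
The first step is to observe the two endpoints: quasitriviality forces any constant tuple to output its unique input, so $a_0 = F(n\Cdot y) = y$ and $a_n = F(n\Cdot x) = x$. Quasitriviality further guarantees that each intermediate $a_j$ lies in $\{x,y\}$, since the multiset of arguments used to form $a_j$ contains only the values $x$ and $y$. Thus the sequence $a_0,\ldots,a_n$ is a $\{x,y\}$--valued ``path'' that begins at $y$ and ends at $x$.

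If $x=y$ the conclusion is trivial for any $k\in[n]$, so I would assume $x\neq y$. Then the set $S = \{j\in[n] : a_j = x\}$ is nonempty (it contains $n$), and I would let $k=\min S$. By definition $a_k=x$, which is precisely the second required equality $F(k\Cdot x,(n-k)\Cdot y)=x$. For the first equality, I would split on whether $k=1$ or $k>1$: in the former case $a_{k-1}=a_0=y$ by the endpoint computation above, while in the latter case the minimality of $k$ rules out $a_{k-1}=x$, and since $a_{k-1}\in\{x,y\}$ we must have $a_{k-1}=y$. Either way, $F((k-1)\Cdot x,(n-k+1)\Cdot y)=y$, as desired.

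There is no real obstacle here; the only point demanding a touch of care is the boundary case $k=1$, which is handled by invoking the explicit value $a_0=y$ rather than the minimality of $k$. Note also that no symmetry or monotonicity of $F$ is needed, so the statement really is a bare consequence of quasitriviality applied along the one--parameter family of tuples $(j\Cdot x,(n-j)\Cdot y)$.
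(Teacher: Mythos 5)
Your proof is correct and rests on the same idea as the paper's: a discrete intermediate-value argument along the sequence $a_j=F(j\Cdot x,(n-j)\Cdot y)$, which equals $y$ at $j=0$ and $x$ at $j=n$ and takes values only in $\{x,y\}$ by quasitriviality. The paper phrases it contrapositively (assuming no such $k$ exists and propagating $a_j=y$ inductively up to the contradiction $F(n\Cdot x)=y$), whereas you locate the first crossing directly via the minimal $k$ with $a_k=x$; the two arguments are interchangeable, and your handling of the boundary case $k=1$ is the only point of care and is done correctly.
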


\begin{proof}
We proceed by contradiction. Suppose that there exist $x,y\in X$, with $x\neq y$, such that for every $k\in [n]$ we have
\begin{equation}\label{eq:co4t}
F((k-1)\Cdot x,(n-k+1)\Cdot y)~=~x\quad\text{or}\quad F(k\Cdot x,(n-k)\Cdot y)~=~y.
\end{equation}
Using the fact that $F(n\Cdot y)=y$ we see that only the second condition of \eqref{eq:co4t} holds. When $k=n$ this gives $F(n\Cdot x)=y$, a contradiction.
\end{proof}

\begin{proposition}\label{prop:19gz}
If $F\colon X^n\to X$ is quasitrivial and ultrabisymmetric, then it is associative and symmetric.
\end{proposition}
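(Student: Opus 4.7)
The plan is first to recover symmetry and bisymmetry of $F$, and then to attack associativity. Since $F$ is quasitrivial it is surjective (Remark~\ref{rem:surj65}); together with ultrabisymmetry, Lemma~\ref{lemma:surj65} then yields that $F$ is symmetric. Proposition~\ref{prop:ds5} also shows that $F$ is bisymmetric. So the only remaining task is to establish associativity of $F$.

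To do so I would reduce to the binary case by introducing the companion $H\colon X^2\to X$ defined by $H(x,y)=F(x,(n-1)\Cdot y)$. Evidently $H$ is quasitrivial. Applying bisymmetry of $F$ to the $n\times n$ matrix whose first row is $(a,b,b,\ldots,b)$ and whose remaining $n-1$ rows are each equal to $(c,d,d,\ldots,d)$, the row-$F$ values reduce by quasitriviality to $H(a,b)$ and $(n-1)$ copies of $H(c,d)$, with outer value $H(H(a,b),H(c,d))$; symmetrically, the column-$F$ values reduce to $H(a,c)$ and $(n-1)$ copies of $H(b,d)$, with outer value $H(H(a,c),H(b,d))$. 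Equating these via bisymmetry of $F$ shows that $H$ is bisymmetric, so by Lemma~\ref{lemma:bis}(c) $H$ is associative.

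The final and most delicate step is to deduce that $F$ is derived from $H$ in the sense of Definition~\ref{de:FH5}, which would then yield associativity of $F$ from that of $H$. Since $F$ is symmetric, the iterated $H$-product must be order-independent, which forces $H$ to be commutative; this commutativity is the main obstacle, because symmetry of $F$ alone does not deliver it, $H(x,y)$ and $H(y,x)$ being values of $F$ on different multisets. I would derive it by combining the threshold assertion of Lemma~\ref{lemma:cons65}, which for each pair $x,y\in X$ locates an index $k\in[n]$ at which $F(j\Cdot x,(n-j)\Cdot y)$ transitions between $y$ and $x$, with further applications of bisymmetry (or, if needed, the strictly stronger ultrabisymmetry) to matrices built around this threshold, so as to rule out the asymmetric configurations $H(x,y)\ne H(y,x)$. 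Once commutativity of $H$ is established, $F$ must coincide with the iterated $H$-product $x_1\circ x_2\circ\cdots\circ x_n$, from which associativity of $F$ follows immediately.
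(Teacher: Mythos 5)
Your first two steps are sound and the intermediate facts are all true: symmetry of $F$ via Remark~\ref{rem:surj65} and Lemma~\ref{lemma:surj65} is exactly how the paper's proof begins, and your matrix computation correctly shows that $H(x,y)=F(x,(n-1)\Cdot y)$ is quasitrivial and bisymmetric, hence associative by Lemma~\ref{lemma:bis}(c). The gap is the final step, and it is not merely delicate --- it is false. Under the hypotheses of the proposition, $F$ need not be derived from $H$, and $H$ need not be commutative. Take $X=\mathbb{Z}_2$, $n=3$, and $F(x_1,x_2,x_3)\equiv x_1+x_2+x_3 \pmod 2$ (the second example in Remark~\ref{rem:2e}). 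This $F$ is quasitrivial on $\mathbb{Z}_2$ and ultrabisymmetric (the value $F(F(\mathbf{r}_1),F(\mathbf{r}_2),F(\mathbf{r}_3))$ is the mod-$2$ sum of all nine matrix entries, hence invariant under any rearrangement), so it satisfies your hypotheses. But $H(x,y)=F(x,y,y)=x$ is the first projection: quasitrivial, bisymmetric and associative, yet not commutative, and the iterated product $x_1\circ x_2\circ x_3=x_1$ differs from $F$. Worse, Proposition~\ref{prop:ack} shows that a quasitrivial, symmetric, associative $F$ is derived from a \emph{quasitrivial} associative binary operation only when $F$ is a maximum for some linear ordering, which this $F$ is not; so no alternative quasitrivial companion (e.g.\ $F((n-1)\Cdot x,y)$) can rescue the reduction. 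Your sentence ``since $F$ is symmetric, the iterated $H$-product must be order-independent, which forces $H$ to be commutative'' presupposes that $F$ equals that iterated product, which is precisely what fails here.

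The paper avoids any reduction to a binary operation and proves associativity directly, using the threshold of Lemma~\ref{lemma:cons65} in a different way. Fix $x_1,\ldots,x_{2n-1}$ and $i\in[n-1]$, and take $k\in[n]$ with $F((k-1)\Cdot x_i,(n-k+1)\Cdot x_{i+n})=x_{i+n}$ and $F(k\Cdot x_i,(n-k)\Cdot x_{i+n})=x_i$. In the left-hand side of the associativity identity, replace $x_{i+n}$ by $F((k-1)\Cdot x_i,(n-k+1)\Cdot x_{i+n})$ and every other outer argument $x_j$ by $F(n\Cdot x_j)$, so that all outer arguments are $F$-values of rows of an $n\times n$ matrix. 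Then apply ultrabisymmetry repeatedly to exchange, entry by entry, the $(n-1)$-tuple $(x_{i+1},\ldots,x_{i+n-1})$ of one row with the $(n-1)$-tuple $((k-1)\Cdot x_i,(n-k)\Cdot x_{i+n})$ of the adjacent row; the two rows become $(k\Cdot x_i,(n-k)\Cdot x_{i+n})$ and $(x_{i+1},\ldots,x_{i+n})$, whose $F$-values are $x_i$ and $F(x_{i+1},\ldots,x_{i+n})$, and the right-hand side of the associativity identity appears. If you want to salvage your argument, this is the correct way to exploit Lemma~\ref{lemma:cons65} together with ultrabisymmetry.
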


\begin{proof}
Symmetry immediately follows from Lemma~\ref{lemma:surj65} and Remark~\ref{rem:surj65}. Let us prove that associativity holds. Let $x_1,\ldots,x_{2n-1}\in X$ and let $i\in [n-1]$. By Lemma~\ref{lemma:cons65} there exists $k\in [n]$ such that
$$
F((k-1)\Cdot x_i,(n-k+1)\Cdot x_{i+n})~=~x_{i+n}\quad\text{and}\quad F(k\Cdot x_i,(n-k)\Cdot x_{i+n})~=~x_i.
$$
We then have
\begin{multline*}
F(x_1,\ldots,x_{i-1},F(x_i,\ldots,x_{i+n-1}),x_{i+n},\ldots,x_{2n-1})\\
=~ F(x_1,\ldots,x_{i-1},F(x_i,\ldots,x_{i+n-1}),F((k-1)\Cdot x_i,(n-k+1)\Cdot x_{i+n}),x_{i+n+1},\ldots,x_{2n-1})
\end{multline*}
Replacing $x_j$ with $F(n\Cdot x_j)$ for all $j\in [2n-1]\setminus\{i,\ldots,i+n\}$ and then applying ultrabisymmetry repeatedly to exchange the $(n-1)$-tuples
$$
(x_{i+1},\ldots,x_{i+n-1})\quad\text{and}\quad((k-1)\Cdot x_i,(n-k)\Cdot x_{i+n}),
$$
we see that the latter expression becomes
\begin{multline*}
F(x_1,\ldots,x_{i-1},F(k\Cdot x_i,(n-k)\Cdot x_{i+n}),F(x_{i+1},\ldots,x_{i+n}),x_{i+n+1},\ldots,x_{2n-1})\\
=~ F(x_1,\ldots,x_i,F(x_{i+1},\ldots,x_{i+n}),x_{i+n+1},\ldots,x_{2n-1}).
\end{multline*}
This shows that $F$ is associative.
\end{proof}

\begin{remark}
Ultrabisymmetry cannot be relaxed into bisymmetry in Proposition~\ref{prop:19gz}. For instance, the ternary operation $F(x,y,z)=y$ is quasitrivial and bisymmetric, but it is neither associative nor symmetric. This example also shows that the result stated in Lemma~\ref{lemma:bis}(c) cannot be extended to $n$-ary operations.
\end{remark}

\begin{proposition}\label{prop:21ft}
If $F\colon X^n\to X$ is associative and symmetric, then it is ultrabisymmetric.
\end{proposition}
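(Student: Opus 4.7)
The plan is to exploit the classical \emph{generalized associative law}: for any associative $n$-ary operation $F$ and any integer $m\geq 1$, all legal parenthesizations of a string of length $L=1+m(n-1)$ evaluate to the same element, which we may denote by $F^{(L)}(x_1,\ldots,x_L)$. This is D\"ornte's theorem (see, e.g., \cite{Dor28,DudMuk96}) and it is a direct consequence of $n$-ary associativity. Since $n^2=1+(n+1)(n-1)$, the iterated expression $F(F(\mathbf{r}_1),\ldots,F(\mathbf{r}_n))$ is one such legal parenthesization of a length-$n^2$ string formed by listing the matrix entries row by row, and therefore it equals $F^{(n^2)}(r_{11},r_{12},\ldots,r_{nn})$.

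The next step is to show that $F^{(L)}$ is invariant under arbitrary permutations of its $L$ arguments whenever $L\geq n$. The key observation is that for every $i\in [L-n+1]$, the generalized associative law allows us to reparenthesize the string so that the $n$ consecutive arguments $x_i,\ldots,x_{i+n-1}$ appear together in an innermost application of $F$; by symmetry of $F$, they can then be permuted arbitrarily without changing the value. In particular, every adjacent transposition of two consecutive arguments is realized this way, since such a transposition lies inside some window of $n$ consecutive positions (using $L\geq n$). Adjacent transpositions generate the full symmetric group $S_L$, so $F^{(L)}$ is symmetric in all of its $L$ arguments.

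With $L=n^2$ in hand, the conclusion is immediate: exchanging two entries of the matrix amounts to applying a single transposition to the corresponding $n^2$-tuple, which leaves $F^{(n^2)}$ unchanged. Hence $F(F(\mathbf{r}_1),\ldots,F(\mathbf{r}_n))$ takes the same value on the exchanged matrix, and $F$ is ultrabisymmetric.

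The only substantive step is the invocation of the generalized associative law; once it is at our disposal, both the total symmetry of $F^{(L)}$ and the final ultrabisymmetry are essentially immediate, so I do not expect a genuine obstacle.
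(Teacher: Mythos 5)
Your proof is correct, and it reaches the conclusion by a more systematic route than the paper's. The paper argues locally: using symmetry of the outer $F$ it reduces to the case where the two exchanged entries lie in adjacent rows, then uses symmetry within each row to push $x_{i,j}$ to the end of its row and $x_{k,l}$ to the front of the next, and finally applies associativity twice to slide the parentheses until the two entries sit inside a common innermost application of $F$, where symmetry swaps them. You instead invoke the generalized associative law for $n$-ary operations to flatten $F(F(\mathbf{r}_1),\ldots,F(\mathbf{r}_n))$ into the long product $F^{(n^2)}$ of the $n^2$ matrix entries (the arithmetic $n^2=1+(n+1)(n-1)$ checks out), prove once and for all that $F^{(L)}$ is invariant under the full symmetric group $S_L$ for $L\geq n$ (every adjacent transposition fits inside a window of $n$ consecutive positions, which can be made an innermost block and then permuted by symmetry of $F$), and conclude immediately. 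The underlying mechanism -- reparenthesize by associativity until the two entries share an inner $F$, then use symmetry -- is the same in both arguments, but your version isolates it as a reusable statement (total symmetry of the long product) rather than an ad hoc computation, at the cost of relying on the generalized associative law for $n$-ary semigroups, which the paper never states explicitly; since that is a classical consequence of the associativity identities (D\"ornte, Post), citing it is legitimate, though a fully self-contained write-up would need to include its short inductive proof.
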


\begin{proof}
Let $[\mathbf{r}_1 ~\cdots ~\mathbf{r}_n]^T,[\mathbf{r}'_1 ~\cdots ~\mathbf{r}'_n]^T\in X^{n\times n}$, where $[\mathbf{r}'_1 ~\cdots ~\mathbf{r}'_n]^T$ is obtained from $[\mathbf{r}_1 ~\cdots ~\mathbf{r}_n]^T$ by exchanging the $(i,j)$- and $(k,l)$-entries for some $i,j,k,l\in [n]$. We only need to prove that
$$
F(F(\mathbf{r}_1),\ldots,F(\mathbf{r}_n)) ~=~ F(F(\mathbf{r}'_1),\ldots,F(\mathbf{r}'_n)).
$$
Permuting the rows of $[\mathbf{r}_1 ~\cdots ~\mathbf{r}_n]^T$ if necessary (this is allowed by symmetry), we may assume that $k=i+1$. Denote by $x_{i,j}$ (resp.\ $x_{k,l}$) the $(i,j)$-entry (resp.\ $(k,l)$-entry) of $[\mathbf{r}_1 ~\cdots ~\mathbf{r}_n]^T$.

Using associativity and symmetry, we see that there exist $p,q\in\{1,\ldots,n\}$, with $p\neq j$ and $q\neq l$, such that
\begin{multline*}
F(F(\mathbf{r}_1),\ldots,F(\mathbf{r}_n))\\
=~ F(F(\mathbf{r}_1),\ldots,F(\mathbf{r}_{i-1}),F(x_{i,p},\ldots,x_{i,j}),F(x_{k,l},\ldots,x_{k,q}),F(\mathbf{r}_{k+1}),\ldots,F(\mathbf{r}_n))\\
=~ F(F(\mathbf{r}_1),\ldots,F(\mathbf{r}_{i-1}),x_{i,p},F(\ldots,x_{i,j},F(x_{k,l},\ldots,x_{k,q})),F(\mathbf{r}_{k+1}),\ldots,F(\mathbf{r}_n))\\
=~ F(F(\mathbf{r}_1),\ldots,F(\mathbf{r}_{i-1}),x_{i,p},F(\ldots,F(x_{i,j},x_{k,l},\ldots),x_{k,q}),F(\mathbf{r}_{k+1}),\ldots,F(\mathbf{r}_n)).
\end{multline*}
This shows that $F$ is ultrabisymmetric since the latter expression is symmetric in $x_{i,j}$ and $x_{k,l}$.
\end{proof}

\begin{corollary}\label{cor:24f1}
If $F\colon X^n\to X$ is quasitrivial, then it is associative and symmetric if and only if it is ultrabisymmetric.
\end{corollary}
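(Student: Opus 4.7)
The plan is to observe that this corollary is essentially a packaging of Propositions~\ref{prop:19gz} and \ref{prop:21ft}, so no new technical work is required; the only thing to check is that the hypothesis of quasitriviality is invoked exactly where needed.

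For the forward implication, I would assume $F$ is associative and symmetric and directly apply Proposition~\ref{prop:21ft} to conclude that $F$ is ultrabisymmetric. Note that quasitriviality is not used here, but the statement of the corollary permits this (it only asserts the equivalence under the standing assumption of quasitriviality).

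For the reverse implication, I would assume $F$ is ultrabisymmetric and, together with the standing hypothesis that $F$ is quasitrivial, invoke Proposition~\ref{prop:19gz} to conclude that $F$ is associative and symmetric. This is the direction where quasitriviality is genuinely required, since ultrabisymmetry alone is not enough to recover associativity (the obstruction being that ultrabisymmetry does not by itself ensure the cancellation-like argument based on Lemma~\ref{lemma:cons65}).

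There is no main obstacle: the corollary is a one-line consequence of the two propositions just proved, and the proof amounts to citing them in the appropriate order.
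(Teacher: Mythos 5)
Your proposal is correct and matches the paper's own proof, which likewise derives the corollary immediately by citing Proposition~\ref{prop:21ft} for the forward direction and Proposition~\ref{prop:19gz} (where quasitriviality is used) for the converse. Your additional remark pinpointing exactly where quasitriviality enters is accurate and consistent with the paper.
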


\begin{proof}
The statement immediately follows from Propositions~\ref{prop:19gz} and \ref{prop:21ft}.
\end{proof}

\begin{remark}
If $F\colon X^n\to X$ is ultrabisymmetric but not quasitrivial, then it need not be associative (e.g., $F(x,y,z)=2x+2y+2z$ when $X=\R$).
\end{remark}

\begin{corollary}\label{cor:24f}
If $F\colon X^n\to X$ is quasitrivial and symmetric, then it is associative if and only if it is bisymmetric.
\end{corollary}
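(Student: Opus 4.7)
The plan is to reduce the claim to the already-established equivalence between associativity-plus-symmetry and ultrabisymmetry (Corollary~\ref{cor:24f1}), and to bridge ultrabisymmetry and bisymmetry via Proposition~\ref{prop:ds5}, which states that ultrabisymmetry always implies bisymmetry and that the converse holds under symmetry.

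For the forward direction, I would assume $F$ is quasitrivial, symmetric, and associative. By Corollary~\ref{cor:24f1}, quasitriviality combined with associativity and symmetry yields ultrabisymmetry of $F$. Since every ultrabisymmetric operation is bisymmetric by the first part of Proposition~\ref{prop:ds5}, we immediately conclude that $F$ is bisymmetric.

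For the reverse direction, I would assume $F$ is quasitrivial, symmetric, and bisymmetric. The key step is invoking the second part of Proposition~\ref{prop:ds5}: because $F$ is symmetric, bisymmetry upgrades to ultrabisymmetry. Then, since $F$ is also quasitrivial, Corollary~\ref{cor:24f1} (or directly Proposition~\ref{prop:19gz}) gives that $F$ is associative (and symmetric, which we already have).

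Thus the whole argument is a two-line composition of previously established facts, and there is no genuine obstacle once the intermediate notion of ultrabisymmetry has been set up. The only thing worth emphasizing is that symmetry of $F$ is exactly what is needed to make bisymmetry and ultrabisymmetry coincide, so that the assumption of symmetry in the corollary is used precisely at that bridging step rather than in the quasitrivial part of the argument.
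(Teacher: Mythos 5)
Your proposal is correct and follows essentially the same route as the paper, which proves the corollary by combining Propositions~\ref{prop:ds5}, \ref{prop:19gz}, and \ref{prop:21ft} (your detour through Corollary~\ref{cor:24f1} is just these same propositions repackaged). Both directions are handled exactly as in the paper, with symmetry used precisely to bridge bisymmetry and ultrabisymmetry.
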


\begin{proof}
The statement immediately follows from Propositions~\ref{prop:ds5}, \ref{prop:19gz}, and \ref{prop:21ft}.
\end{proof}

From Corollary~\ref{cor:24f} we immediately derive the following theorem, which is an important but surprising result.

\begin{theorem}
In Theorems~\ref{thm:main2}, \ref{thm:f456dfs}, and Corollary~\ref{cor:fs5fs} we can replace associativity with bisymmetry.
\end{theorem}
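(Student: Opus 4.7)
The plan is simply to chain Corollary~\ref{cor:24f} with each of the three target results. That corollary already establishes that, for a quasitrivial and symmetric operation $F\colon X^n\to X$, associativity and bisymmetry are equivalent; so for each target result it suffices to check that quasitriviality and symmetry of $F$ are available precisely where associativity is being swapped out.

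For Theorem~\ref{thm:main2}, the blanket hypotheses on $F$ already include quasitriviality and symmetry, hence by Corollary~\ref{cor:24f} the assertion ``$F$ is associative'' in item $(i)$ can be replaced by ``$F$ is bisymmetric'' and the equivalence chain $(i)\Leftrightarrow(ii)\Leftrightarrow(iii)$ is preserved. For Theorem~\ref{thm:f456dfs}, assertion $(i)$ itself contains ``quasitrivial'' and ``symmetric'' as conjuncts, so Corollary~\ref{cor:24f} licenses swapping ``associative'' for ``bisymmetric'' in the fourth conjunct while maintaining equivalence with $(ii)$ and $(iii)$. The parenthetical remark about $n=2$ still applies, since Proposition~\ref{prop:MarMayTor} combined with Lemma~\ref{lemma:bis}(b) shows that for a binary $F$ the properties quasitrivial, symmetric, nondecreasing force both associativity and bisymmetry, so either one can be ignored in that case.

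For Corollary~\ref{cor:fs5fs}, any $F$ of the form \eqref{eq:FUmm} with $G$ quasitrivial, symmetric, and nondecreasing is itself quasitrivial and symmetric (this is the first line of the original proof), so Corollary~\ref{cor:24f} again converts the conclusion ``$F$ is associative and derived from $G$'' into ``$F$ is bisymmetric and derived from $G$''.

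I do not anticipate any genuine obstacle: the analytic content lives entirely in Corollary~\ref{cor:24f}, and the present theorem is a bookkeeping consolidation of that equivalence with the three earlier statements.
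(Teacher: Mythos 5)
Your proposal is correct and matches the paper's own argument, which derives the theorem immediately from Corollary~\ref{cor:24f}; you simply spell out the verification that quasitriviality and symmetry are in force in each of the three target statements, which the paper leaves implicit.
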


We end this section by investigating bisymmetric operations that have neutral elements.

\begin{proposition}\label{prop:20gt}
If $F\colon X^n\to X$ is bisymmetric and has a neutral element, then it is associative and symmetric.
\end{proposition}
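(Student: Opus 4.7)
The plan is to exploit bisymmetry through two carefully chosen $n\times n$ matrices: one that effects the transposition of two arguments of $F$ (yielding symmetry), and a second that moves an outer argument of a nested expression into the inner slot (yielding an identity from which associativity will follow). Let $e\in X$ denote a neutral element of $F$.

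To prove symmetry, for fixed $i\neq j$ in $[n]$ I would consider the $n\times n$ matrix whose $(k,k)$-entry is $y_k$ for $k\in [n]\setminus\{i,j\}$, whose $(i,j)$-entry is $y_i$, whose $(j,i)$-entry is $y_j$, and whose remaining entries are all $e$. By the neutral element property, each row and each column contains exactly one non-$e$ entry, so the row-wise evaluation of $F$ returns $F(y_1,\ldots,y_n)$, whereas the column-wise evaluation returns $F$ applied to the tuple in which $y_i$ and $y_j$ are interchanged. Bisymmetry identifies the two, and since transpositions generate the full symmetric group, $F$ is symmetric.

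Applying bisymmetry next to the matrix whose first row is $(y_1,\ldots,y_n)$, whose first column is $(y_1,y_{n+1},\ldots,y_{2n-1})$, and whose remaining entries are all $e$, the neutral element property collapses the other rows and columns to single elements, producing the key identity
$$
F(F(y_1,\ldots,y_n),y_{n+1},\ldots,y_{2n-1}) ~=~ F(F(y_1,y_{n+1},\ldots,y_{2n-1}),y_2,\ldots,y_n),\qquad (\star)
$$
valid for all $y_1,\ldots,y_{2n-1}\in X$. To establish the $i=1$ instance of associativity, I would apply $(\star)$ to its left-hand side, invoke symmetry to move $y_{n+1}$ to the first inner position, apply $(\star)$ a second time with $y_{n+1}$ playing the role of the \emph{pivot} $y_1$---so that the new inner tuple becomes $(y_{n+1},y_2,\ldots,y_n)$ and the new outer tuple becomes $(y_1,y_{n+2},\ldots,y_{2n-1})$---and finally use symmetry of the outer $F$ to reach $F(y_1,F(y_2,\ldots,y_{n+1}),y_{n+2},\ldots,y_{2n-1})$. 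The remaining instances $i\geq 2$ follow from the $i=1$ case by relabelling, because $F$ is symmetric.

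The main obstacle is the pivot choice in the second application of $(\star)$: it must be $y_{n+1}$, the element common to the $i=1$ and $i=2$ inner tuples, rather than $y_1$; otherwise $(\star)$ merely undoes the first application and no progress is made. Once this choice is made correctly, the rest of the argument is routine symbolic manipulation using the symmetry established at the start.
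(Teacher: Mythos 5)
Your proof is correct. The symmetry half is exactly the paper's argument: the same permutation matrix with $x_i$ at position $(i,j)$, $x_j$ at position $(j,i)$, the remaining $x_k$ on the diagonal, and $e$ elsewhere. For associativity, however, you take a genuinely different route. The paper first upgrades bisymmetry plus the freshly proved symmetry to \emph{ultrabisymmetry} via Proposition~\ref{prop:ds5}, and then passes between the two block matrices $[\mathbf{r}_1\cdots\mathbf{r}_n]^T$ and $[\mathbf{r}'_1\cdots\mathbf{r}'_n]^T$ (one row carrying the full window $(x_i,\ldots,x_{i+n-1})$, the others of the form $(x_j,(n-1)\Cdot e)$) by a chain of single-entry exchanges. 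You instead apply plain bisymmetry once to the cross-shaped matrix with first row $(y_1,\ldots,y_n)$, first column $(y_1,y_{n+1},\ldots,y_{2n-1})$ and $e$ elsewhere, obtaining the transfer identity $(\star)$, and then compose two instances of $(\star)$ around the pivot $y_{n+1}$ (together with inner and outer symmetry) to get the $i=1$ associativity identity; your reduction of the cases $i\geq 2$ to $i=1$ by relabelling is valid precisely because symmetry lets you permute both the inner and the outer arguments. Your argument is the direct $n$-ary analogue of the classical binary proof of Lemma~\ref{lemma:bis}(a) and is more self-contained, since it never needs the notion of ultrabisymmetry; the paper's version is slightly shorter on the page only because Proposition~\ref{prop:ds5} is already available and is reused elsewhere in that section. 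One small presentational point: when you apply $(\star)$ the second time you should state explicitly the substitution $y'_1=y_{n+1}$, $(y'_2,\ldots,y'_n)=(y_1,y_{n+2},\ldots,y_{2n-1})$, $(y'_{n+1},\ldots,y'_{2n-1})=(y_2,\ldots,y_n)$, so the reader can check that the output is $F\bigl(F(y_{n+1},y_2,\ldots,y_n),y_1,y_{n+2},\ldots,y_{2n-1}\bigr)$, which symmetry turns into the desired right-hand side.
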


\begin{proof}
Let $e$ be a neutral element of $F$. Let us first prove symmetry. Let $x_1,\ldots,x_n\in X$, let $i,j\in [n]$, and let $[\mathbf{c}_1 ~\cdots ~\mathbf{c}_n]=[\mathbf{r}_1 ~\cdots ~\mathbf{r}_n]^T\in X^{n\times n}$ be defined as
$$
\mathbf{r}_k ~=~
\begin{cases}
((j-1)\Cdot e,x_i,(n-j)\Cdot e), & \text{if $k=i$}\\
((i-1)\Cdot e,x_j,(n-i)\Cdot e), & \text{if $k=j$}\\
((k-1)\Cdot e,x_k,(n-k)\Cdot e), & \text{otherwise}.
\end{cases}
$$
By bisymmetry we have
\begin{eqnarray*}
F(x_1,\ldots,x_i,\ldots,x_j,\ldots,x_n) &=& F(F(\mathbf{r}_1),\ldots,F(\mathbf{r}_n)) ~=~ F(F(\mathbf{c}_1),\ldots,F(\mathbf{c}_n))\\
&=& F(x_1,\ldots,x_j,\ldots,x_i,\ldots,x_n).
\end{eqnarray*}
This shows that $F$ is symmetric.

Let us now show that $F$ is associative by using ultrabisymmetry (which follows from bisymmetry and symmetry by Proposition~\ref{prop:ds5}). Let $x_1,\ldots,x_{2n-1}\in X$, let $i\in [n-1]$ and let $[\mathbf{r}_1 ~\cdots ~\mathbf{r}_n]^T,[\mathbf{r}'_1 ~\cdots ~\mathbf{r}'_n]^T\in X^{n\times n}$ be defined as
$$
\mathbf{r}_k ~=~
\begin{cases}
(x_k,(n-1)\Cdot e), & \text{if $k<i$}\\
(x_i,\ldots,x_{i+n-1}), & \text{if $k=i$}\\
(x_{k+n-1},(n-1)\Cdot e), & \text{if $k>i$}
\end{cases}
$$
and
$$
\mathbf{r}'_k ~=~
\begin{cases}
(x_k,(n-1)\Cdot e), & \text{if $k<i+1$}\\
(x_{i+1},\ldots,x_{i+n}), & \text{if $k=i+1$}\\
(x_{k+n-1},(n-1)\Cdot e), & \text{if $k>i+1$}.
\end{cases}
$$
Using ultrabisymmetry, we then have
\begin{multline*}
F(x_1,\ldots,x_{i-1},F(x_i,\ldots,x_{i+n-1}),x_{i+n},\ldots,x_{2n-1})~=~ F(F(\mathbf{r}_1),\ldots,F(\mathbf{r}_n))\\
=~ F(F(\mathbf{r}'_1),\ldots,F(\mathbf{r}'_n))~=~  F(x_1,\ldots,x_i,F(x_{i+1},\ldots,x_{i+n}),x_{i+n+1},\ldots,x_{2n-1}).
\end{multline*}
This shows that $F$ is associative.
\end{proof}

\begin{corollary}
Assume that $F\colon X^n\to X$ has a neutral element. Then the following assertions are equivalent.
\begin{enumerate}
\item[(i)] $F$ is bisymmetric.
\item[(ii)] $F$ is associative and symmetric.
\item[(iii)] $F$ is ultrabisymmetric.
\end{enumerate}
\end{corollary}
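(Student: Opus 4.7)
The plan is to close the cycle $(i)\Rightarrow(ii)\Rightarrow(iii)\Rightarrow(i)$ by invoking three results already established in this section, so essentially no new work is required; the corollary is a repackaging of what has just been proved under the added hypothesis that $F$ has a neutral element.

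First I would observe that the implication $(i)\Rightarrow(ii)$ is exactly the content of Proposition~\ref{prop:20gt}, which states that a bisymmetric operation with a neutral element is associative and symmetric. Next, the implication $(ii)\Rightarrow(iii)$ is Proposition~\ref{prop:21ft}, which does not even require the existence of a neutral element: associativity together with symmetry is enough to give ultrabisymmetry. Finally, the implication $(iii)\Rightarrow(i)$ is the first (unconditional) half of Proposition~\ref{prop:ds5}, which says that ultrabisymmetry always implies bisymmetry.

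Since each of the three implications has already been established in full generality earlier in the section, the proof reduces to citing these three results in sequence. There is no genuine obstacle; the only thing worth emphasizing is that the hypothesis of possessing a neutral element is actually used only in the step $(i)\Rightarrow(ii)$ (it is this hypothesis that rescues bisymmetry from being too weak to force symmetry, as the projection example in the remark following Proposition~\ref{prop:ds5} shows). The other two implications hold for arbitrary $F\colon X^n\to X$, so the neutral element plays no further role once we have entered the cycle at $(ii)$ or $(iii)$.
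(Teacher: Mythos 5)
Your proof is correct and follows exactly the same route as the paper: the cycle $(i)\Rightarrow(ii)\Rightarrow(iii)\Rightarrow(i)$ via Proposition~\ref{prop:20gt}, Proposition~\ref{prop:21ft}, and Proposition~\ref{prop:ds5}, respectively. Your added remark that the neutral element is only needed for $(i)\Rightarrow(ii)$ is accurate and consistent with the counterexamples given in the paper.
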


\begin{proof}
We have (i) $\Rightarrow$ (ii) by Proposition~\ref{prop:20gt}. We have (ii) $\Rightarrow$ (iii) by Proposition~\ref{prop:21ft}. Finally we have (iii) $\Rightarrow$ (i) by Proposition~\ref{prop:ds5}.
\end{proof}

\begin{remark}
If $F\colon X^n\to X$ is bisymmetric and does not have a neutral element, then it need not be associative nor symmetric (e.g., $F(x,y,z)=x+2y+3z$ when $X=\R$). If $F\colon X^n\to X$ is ultrabisymmetric and does not have a neutral element, then it need not be associative (e.g., $F(x,y,z)=2x+2y+2z$ when $X=\R$).
\end{remark}

\section{Operations on finite chains}

We now consider the special case when $X$ is a finite chain. Without loss of generality we will only consider the $k$-element chains $L_k=\{1,\ldots,k\}$, $k\geq 1$, endowed with the usual ordering relation $\leq$. It is known (see, e.g., \cite{BerPer06}) that there are exactly $2^{k-1}$ linear orderings $\preceq$ on $L_k$ that are single-peaked for $\leq$.

\begin{corollary}\label{cor:f456dfs2}
Let $F\colon L_k^n\to L_k$ be an operation. The following assertions are equivalent.
\begin{enumerate}
\item[(i)] $F$ is quasitrivial, symmetric, nondecreasing, and associative (associativity can be ignored when $n=2$).
\item[(ii)] $F$ is an idempotent $n$-ary uninorm.
\item[(iii)] There exists a linear ordering $\preceq$ on $L_k$ that is single-peaked for $\leq$ such that
\begin{equation}\label{eq:conjugMax5}
F(x_1,\ldots,x_n) ~=~ x_1\vee_{\preceq}\cdots\vee_{\preceq} x_n{\,},\qquad x_1,\ldots,x_n\in L_k.
\end{equation}
\end{enumerate}
If any of these assertions is satisfied, then $F$ has the neutral element $a_1$, where $a_1$ is the minimal element of $(L_n,\preceq)$. Also, there are exactly $2^{k-1}$ such operations.
\end{corollary}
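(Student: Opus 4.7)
The plan is to split the corollary into four self-contained tasks: the equivalence (i)~$\Leftrightarrow$~(iii), the equivalence (i)~$\Leftrightarrow$~(ii), the identification of the neutral element, and the counting of operations. The first of these is essentially free: Theorem~\ref{thm:f456dfs} establishes (i)~$\Leftrightarrow$~(iii) on any chain whatsoever, so finiteness of $L_k$ plays no role here and the implication can be quoted directly.

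Next I would handle the two directions with (ii) separately. For (ii)~$\Rightarrow$~(i), I would unpack the definition of an idempotent $n$-ary uninorm (associative, symmetric, nondecreasing, with a neutral element) and invoke Corollary~\ref{cor:fs6}, where the quasitriviality of such an operation was already recorded. The converse (i)~$\Rightarrow$~(ii) is where finiteness enters: using the already obtained equivalence (i)~$\Leftrightarrow$~(iii), pick a single-peaked $\preceq$ on $L_k$ such that $F$ has the form~\eqref{eq:conjugMax5}; since $L_k$ is finite, the chain $(L_k,\preceq)$ admits a $\preceq$-minimal element $a_1$, which by Remark~\ref{rem:Ne6} is a neutral element of $F$. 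Combined with the properties listed in (i), this exhibits $F$ as an $n$-ary uninorm, and idempotency is immediate from quasitriviality. The same argument identifies $a_1$ as the neutral element, confirming the penultimate claim of the corollary.

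It remains to count. Here I would argue that the map $F\mapsto\,\preceq$ provided by formula~\eqref{eq:defLor4} (applied to the reduced binary operation $G(x,y)=F((n-1)\Cdot x,y)$ from Theorem~\ref{thm:main2}, equivalently via the characterization in Proposition~\ref{prop:ack}) is a bijection between operations satisfying~(i) and single-peaked linear orderings on $(L_k,\leq)$; the inverse is given by~\eqref{eq:conjugMax5}. Since the number of such single-peaked linear orderings is $2^{k-1}$, as recalled just before the statement of the corollary, the count follows.

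I do not anticipate a genuine obstacle in this proof: everything reduces quickly to results already proved in Sections~2 and~3 plus the finiteness hypothesis. The only point that needs a little bookkeeping is to ensure that the bijection between $F$ and $\preceq$ is well-defined in both directions (uniqueness of $\preceq$ given $F$ follows from~\eqref{eq:defLor4}, and the fact that distinct orderings give distinct $F$ follows from~\eqref{eq:conjugMax5} applied to pairs), so that the counting really transfers from single-peaked orderings to operations.
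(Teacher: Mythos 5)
Your proposal is correct and follows essentially the same route as the paper: the equivalences are obtained by quoting Theorem~\ref{thm:f456dfs} for (i)$\Leftrightarrow$(iii), Corollary~\ref{cor:fs6} for (ii)$\Rightarrow$(i), and using the fact that the finite chain $(L_k,\preceq)$ has a $\preceq$-minimal element (hence a neutral element, by Remark~\ref{rem:Ne6}) to pass from (iii) to (ii). Your explicit verification that $F\mapsto{\preceq}$ is a bijection is a slightly more careful treatment of the $2^{k-1}$ count than the paper, which leaves that step implicit, but it is not a different argument.
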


\begin{proof}
(i) $\Rightarrow$ (iii). This implication follows from Theorem~\ref{thm:f456dfs}.

(iii) $\Rightarrow$ (ii). This is immediate since the minimal element of $(L_n,\preceq)$ is the neutral element of $F$.

(ii) $\Rightarrow$ (i). This implication follows from Corollary~\ref{cor:fs6}.
\end{proof}

\begin{remark}
By Corollary~\ref{cor:24f} (resp.\ Corollary~\ref{cor:24f1}) we can replace associativity with bisymmetry (resp.\ associativity and symmetry with ultrabisymmetry) in Corollary~\ref{cor:f456dfs2}.
\end{remark}

It is easy to see that any single-peaked linear ordering $a_1\prec\cdots\prec a_k$ on $L_k$ can be constructed as follows.
\begin{enumerate}
\item[1.] Choose $a_1\in L_k$.
\item[2.] For $i=2,\ldots,k$, choose for $a_i$ a closest element to the set $C_{i-1}$ in $L_k\setminus C_{i-1}$, where $C_i=\{a_1,\ldots,a_i\}$.
\end{enumerate}
From this observation we can now provide a graphical characterization of the idempotent $n$-ary uninorms $F\colon L_k^n\to L_k$ in terms of their contour plots. Recall that the contour plot of any operation $F\colon L_k^n\to L_k$ is the undirected graph $(L_k^n,E)$, where
$$
E ~=~ \{\{\bfx,\bfy\}\mid\bfx\neq\bfy~\text{and}~F(\bfx)=F(\bfy)\}.
$$

\begin{theorem}\label{thm:GC}
The following algorithm outputs the contour plot of an arbitrary idempotent $n$-ary uninorm $F\colon L_k^n\to L_k$.

\smallskip
\begin{enumerate}
  \item[Step 1.] Choose the neutral element $a_1\in L_k$ and set $C_1=\{a_1\}$. The point $(n\Cdot a_1)$ is necessarily isolated for $F$ with value $a_1$
  \item[Step 2.] For $i=2,\ldots,k$
  \begin{enumerate}
  \item[1.] Pick a closest element $a_i$ to $C_{i-1}$ in $L_k\setminus C_{i-1}$
  \item[2.] Set $C_i=\{a_i\}\cup C_{i-1}$
  \item[3.] Connect all the points in $C_i^n\setminus C_{i-1}^n$ with common value $a_i$
  \end{enumerate}
\end{enumerate}
\end{theorem}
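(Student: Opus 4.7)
The plan is to reduce the claim to Corollary~\ref{cor:f456dfs2}, which characterizes the idempotent $n$-ary uninorms on $L_k$ as the $\preceq$-maximum operations with $\preceq$ single-peaked w.r.t.\ $\leq$, and then to check that the algorithm traces out the level sets of such an operation as $\preceq$ is built up from its minimal element.

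Given an arbitrary idempotent $n$-ary uninorm $F\colon L_k^n\to L_k$, Corollary~\ref{cor:f456dfs2} supplies a single-peaked linear ordering $a_1\prec\cdots\prec a_k$ on $L_k$ such that $F(x_1,\ldots,x_n)=x_1\vee_{\preceq}\cdots\vee_{\preceq} x_n$, with neutral element $a_1$. Setting $C_i=\{a_1,\ldots,a_i\}$, the level sets of $F$ are exactly $L_1=\{(n\Cdot a_1)\}$ and, for $i\geq 2$, $L_i=C_i^n\setminus C_{i-1}^n$, since an $n$-tuple has $\preceq$-maximum equal to $a_i$ iff all its entries lie in $C_i$ and at least one of them equals $a_i$ (using that $a_i\succ y$ for every $y\in C_{i-1}$).

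Next I would invoke the observation preceding the theorem that the single-peaked linear orderings on $L_k$ are precisely those producible by the greedy rule in Step~2.1. This requires two implications. The greedy-to-single-peaked direction is immediate by induction: the greedy rule keeps $C_i$ a contiguous integer interval of $L_k$ (the new element being forced to be either $\min C_{i-1}-1$ or $\max C_{i-1}+1$), which rules out any triple $a<b<c$ with $a,c\in C_{i-1}$ and $b\notin C_{i-1}$. For the converse, if $\preceq$ is single-peaked and some $a_i$ fails to be adjacent to $C_{i-1}$, then some $b\in L_k\setminus C_{i-1}$ lies strictly between $a_i$ and a closest element $c\in C_{i-1}$; the triple $\{c,b,a_i\}$ has $\leq$-centrist $b$ that is $\preceq$-ranked last (since $c\prec a_i$ while $b\succ a_i$), contradicting Definition~\ref{de:Black}.

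With these identifications the algorithm matches the contour plot exactly: Step~1 records $L_1=\{(n\Cdot a_1)\}$ as isolated, which is legitimate by Proposition~\ref{prop:eee}(iv) since $a_1$ is the neutral element; and each iteration of Step~2 records the level set $L_i$ with its constant $F$-value $a_i$. Since the $L_i$'s partition $L_k^n$ and the values $a_i$ are distinct, the algorithm outputs the full contour plot. The main obstacle is the clean verification of the greedy-versus-single-peaked equivalence; the rest is bookkeeping on top of Corollary~\ref{cor:f456dfs2} and Proposition~\ref{prop:eee}.
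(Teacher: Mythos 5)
Your proposal is correct and follows essentially the same route as the paper: reduce to Corollary~\ref{cor:f456dfs2}, identify the level sets of a $\preceq$-maximum operation with the sets $C_i^n\setminus C_{i-1}^n$, and use the fact that the greedy rule of Step~2.1 generates exactly the single-peaked linear orderings. The only difference is that you spell out the greedy/single-peaked equivalence (both directions), which the paper dismisses as ``easy to see'' in the remark preceding the theorem; your verification of it is sound.
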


\begin{proof}
The algorithm provides a single-peaked linear ordering $a_1\prec\cdots\prec a_k$ on $L_k$ together with the operation $F\colon L_k^n\to L_k$ defined as
$$
F(x_1,\ldots,x_n) ~=~
\begin{cases}
a_1, & \text{if $a_1\in\{x_1,\ldots,x_n\}$ and $a_2,\ldots,a_k\notin\{x_1,\ldots,x_n\}$},\\
a_2, & \text{if $a_2\in\{x_1,\ldots,x_n\}$ and $a_3,\ldots,a_k\notin\{x_1,\ldots,x_n\}$},\\
& \vdots\\
a_k, & \text{if $a_k\in\{x_1,\ldots,x_n\}$}.
\end{cases}
$$
This means that $F$ has precisely the form \eqref{eq:conjugMax5}.
\end{proof}

Figure~\ref{fig:fs2} shows the contour plot of an idempotent binary uninorm on $L_6$. To simplify the figure, we have omitted edges obtained by transitivity (i.e., connected points are joined by paths). The value shown on each path indicates the corresponding value.

\setlength{\unitlength}{3.5ex}
\begin{figure}[htbp]
\begin{center}
\begin{small}
\begin{picture}(8,8)
\put(0.5,0.5){\vector(1,0){7}}\put(0.5,0.5){\vector(0,1){7}}
\multiput(1.5,0.45)(1,0){6}{\line(0,1){0.1}}%
\multiput(0.45,1.5)(0,1){6}{\line(1,0){0.1}}%
\put(1.5,0){\makebox(0,0){$1$}}\put(2.5,0){\makebox(0,0){$2$}}\put(3.5,0){\makebox(0,0){$3$}}
\put(4.5,0){\makebox(0,0){$4$}}\put(5.5,0){\makebox(0,0){$5$}}\put(6.5,0){\makebox(0,0){$6$}}
\put(0,1.5){\makebox(0,0){$1$}}\put(0,2.5){\makebox(0,0){$2$}}\put(0,3.5){\makebox(0,0){$3$}}
\put(0,4.5){\makebox(0,0){$4$}}\put(0,5.5){\makebox(0,0){$5$}}\put(0,6.5){\makebox(0,0){$6$}}
\multiput(1.5,1.5)(0,1){6}{\multiput(0,0)(1,0){6}{\circle*{0.2}}}
\drawline[1](6.5,1.5)(1.5,1.5)(1.5,6.5)\drawline[1](2.5,6.5)(6.5,6.5)(6.5,2.5)\drawline[1](2.5,5.5)(2.5,2.5)(5.5,2.5)
\drawline[1](3.5,5.5)(3.5,3.5)(5.5,3.5)\drawline[1](4.5,5.5)(5.5,5.5)(5.5,4.5)
\put(1.85,1.85){\makebox(0,0){$1$}}\put(2.85,2.85){\makebox(0,0){$2$}}\put(3.85,3.85){\makebox(0,0){$3$}}
\put(4.85,4.85){\makebox(0,0){$4$}}\put(5.85,5.85){\makebox(0,0){$5$}}\put(6.85,6.85){\makebox(0,0){$6$}}
\end{picture}
\end{small}
\caption{An idempotent binary uninorm on $L_6$ (contour plot)}
\label{fig:fs2}
\end{center}
\end{figure}
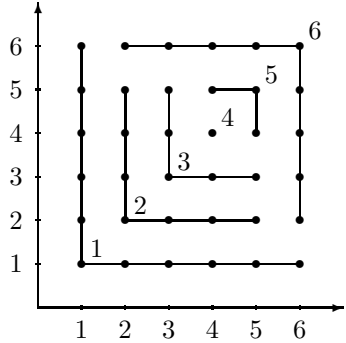

\begin{remark}
We remark that the binary versions of Corollary~\ref{cor:f456dfs2} and Theorem~\ref{thm:GC} were established in \cite{CouDevMar} by means of elementary proofs without using Proposition~\ref{prop:ack}.
\end{remark}

We end this section by the following alternative description of the class of idempotent $n$-ary uninorms.

\begin{theorem}\label{thm:Deb1}
An operation $F\colon L_k^n\to L_k$ with a neutral element $e$ is an idempotent uninorm if and only if there exists a nonincreasing map $g\colon\{1,\ldots,e\}\to\{e,\ldots,k\}$ (nonincreasing means that $g(x)\geq g(y)$ whenever $x\leq y$), with $g(e)=e$, such that
$$
F(x_1,\ldots,x_n) ~=~
\begin{cases}
\bigwedge_{i=1}^nx_i, & \text{if $\bigvee_{i=1}^nx_i\leq\overline{g}(\bigwedge_{i=1}^nx_i)$ and $\bigwedge_{i=1}^nx_i\leq\overline{g}(1)$},\\
\bigvee_{i=1}^nx_i, & \text{otherwise},
\end{cases}
$$
where $\overline{g}\colon L_k\to L_k$ is defined by
$$
\overline{g}(x) ~=~
\begin{cases}
g(x), & \text{if $x\leq e$},\\
\max\{z\in\{1,\ldots,e\}\mid g(z)\geq x\}, & \text{if $e\leq x\leq g(1)$},\\
1, & \text{if $x>g(1)$}.
\end{cases}
$$
\end{theorem}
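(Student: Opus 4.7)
The plan is to reduce the $n$-ary problem to the binary one via Corollary~\ref{cor:fs62}, and to translate the single-peaked linear ordering associated with an idempotent binary uninorm into the promised profile function $g$.

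For the necessity direction, I would start with an idempotent $n$-ary uninorm $F$ on $L_k$ with neutral element $e$. By Corollary~\ref{cor:fs62}, one has $F(x_1,\ldots,x_n)=U(\bigwedge_i x_i,\bigvee_i x_i)$ where $U(x,y)=F((n-1)\Cdot x,y)$ is an idempotent binary uninorm with neutral element $e$. Applying Proposition~\ref{prop:ack} to $U$ yields a linear ordering $\preceq$ on $L_k$ for which $U$ is the $\preceq$-maximum; by Proposition~\ref{prop:Bl56} this $\preceq$ is single-peaked w.r.t.\ $\leq$, and by Remark~\ref{rem:Ne6} its $\preceq$-minimum is $e$. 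A short application of single-peakedness with $e$ as the $\preceq$-minimum forces $\preceq$ to coincide with the reverse of $\leq$ on $\{1,\ldots,e\}$ and with $\leq$ itself on $\{e,\ldots,k\}$, so the only remaining freedom is how the two halves interleave. I encode this by setting $g(a)=\max\{b\in L_k\mid b\preceq a\}$ for $a\in\{1,\ldots,e\}$: then $g(e)=e$ is immediate, $g(a)\geq e$ for $a<e$ since $e\prec a$, and $g$ is nonincreasing by the reverse order on $\{1,\ldots,e\}$ together with the convexity of the lower sets $\{b\mid b\preceq a\}$ established earlier in the paper.

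The formula itself is then verified case-by-case on $a=\bigwedge_i x_i$ and $b=\bigvee_i x_i$: I show that $U(a,b)=a$ exactly when $a\leq e$ and $b\leq g(a)$, and that this coincides with the stated pair of inequalities once $\overline{g}$ is unfolded piecewise. For $a\leq e$ the first condition reduces to $b\leq g(a)$ while the second is automatic since $\overline{g}(1)=g(1)\geq e\geq a$; for $a>e$ one notes that $\overline{g}(a)\leq e<a\leq b$ (or $\overline{g}(a)=1$), so the first inequality always fails and $F=b$ as required.

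For the sufficiency direction, given the data $g$ I define $U\colon L_k^2\to L_k$ by the same formula (with $\bigwedge_i,\bigvee_i$ replaced by $\wedge,\vee$). By construction $U$ is quasitrivial and symmetric, and a direct check shows that $e$ is a neutral element. The only nontrivial verification is nondecreasing monotonicity of $U$, which follows from the observation that $\overline{g}$ is itself nonincreasing (inspected separately on the three regimes $x\leq e$, $e\leq x\leq g(1)$, $x>g(1)$, using that $g$ is nonincreasing). Proposition~\ref{prop:MarMayTor} then yields associativity of $U$, so $U$ is an idempotent binary uninorm, and a second application of Corollary~\ref{cor:fs62} gives that $F$ is an idempotent $n$-ary uninorm. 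The main obstacle will be the careful bookkeeping around $\overline{g}$: verifying its nonincreasingness across the three regimes, and checking that the two displayed inequalities jointly recover the simple criterion ``$a\leq e$ and $b\leq g(a)$'' extracted from the single-peaked ordering. Everything else is a routine application of the structural results already in hand.
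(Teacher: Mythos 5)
Your argument is correct, and it differs from the paper's in one substantive respect. The paper disposes of this theorem in two lines: it cites the binary case from De~Baets, Fodor, Ruiz-Aguilera and Torrens (\cite[Theorem~3]{DeBFodRuiTor09}) and observes that the $n$-ary version then follows from Theorem~\ref{thm:f456dfs}. Your reduction to the binary case via Corollary~\ref{cor:fs62} is the same move, but instead of citing the binary result you reprove it from the single-peaked-ordering machinery of Section~3, and the details check out: Corollary~\ref{cor:sisd22} does force $\preceq$ to be reversed-$\leq$ on $\{1,\ldots,e\}$ and $\leq$ on $\{e,\ldots,k\}$; your $g(a)=\max\{b\mid b\preceq a\}$ lands in $\{e,\ldots,k\}$ and is nonincreasing because the $\preceq$-lower sets are nested; convexity of those lower sets gives $\{b\mid b\preceq a\}=\{a,\ldots,g(a)\}$ for $a\leq e$, which is exactly the criterion ``$\bigwedge_ix_i\leq e$ and $\bigvee_ix_i\leq g(\bigwedge_ix_i)$''; and your case analysis correctly shows this coincides with the two displayed inequalities (noting, as you implicitly do, that the condition $\bigwedge_ix_i\leq\overline{g}(1)$ is automatic when $\bigwedge_ix_i\leq e$ and that the first condition already fails when $\bigwedge_ix_i>e$, so the second inequality is in fact redundant in this formulation). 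The sufficiency direction via down-set monotonicity of the region where $F=\bigwedge$, followed by Proposition~\ref{prop:MarMayTor} for associativity, is also sound. What your route buys is self-containment and a transparent dictionary between the profile function $g$ and the single-peaked linear order (namely $g(a)$ records the largest element already $\preceq$-dominated by $a$); what the paper's route buys is brevity.
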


\begin{proof}
This result was established when $n=2$ in \cite[Theorem~3]{DeBFodRuiTor09}. The general $n$-ary version then follows from Theorem~\ref{thm:f456dfs}.
\end{proof}

\section*{Acknowledgments}

The authors would like to thank Bruno Teheux for fruitful discussions and valuable remarks. This research is supported by the Internal Research Project R-AGR-0500 of the University of Luxembourg and the Luxembourg National Research Fund R-AGR-3080. The second author is also supported by the Hungarian National Foundation for Scientific Research, Grant No. K124749.


\begin{thebibliography}{99}

\bibitem{Ack}
N.~L.~Ackerman.
\newblock A characterization of quasitrivial $n$-semigroups.
\newblock To appear in {\em Algebra Universalis}.

\bibitem{BerPer06}
S.~Berg and T.~Perlinger.
\newblock Single-peaked compatible preference profiles: some combinatorial results.
\newblock {\em Social Choice and Welfare} 27(1):89--102, 2006.

\bibitem{Bla48}
D.~Black.
\newblock On the rationale of group decision-making.
\newblock {\em J. Polit. Economy}, 56(1):23--34, 1948.

\bibitem{Bla87}
D.~Black.
\newblock {\em The theory of committees and elections}.
\newblock Kluwer Academic Publishers, Dordrecht, 1987.

\bibitem{CouDevMar}
M.~Couceiro, J.~Devillet, and J.-L.~Marichal.
\newblock Characterizations of idempotent discrete uninorms.
\newblock {\em Fuzzy Sets and Systems}, 334:60--72, 2018.

\bibitem{CzoDre84}
E.~Czoga{\l}a and J.~Drewniak.
\newblock Associative monotonic operations in fuzzy set theory.
\newblock {\em Fuzzy Sets and Systems}, 12(3):249--269, 1984.

\bibitem{DeB99}
B.~De Baets.
\newblock Idempotent uninorms.
\newblock {\em Eur. J. Oper. Res.}, 118:631--642, 1999.

\bibitem{DeBFodRuiTor09}
B.~De Baets, J.~Fodor, D.~Ruiz-Aguilera, and J.~Torrens.
\newblock Idempotent uninorms on finite ordinal scales.
\newblock {\em Int. J. Uncertainty, Fuzziness and Knowledge-Based Systems}, 17(1):1--14, 2009.

\bibitem{Dor28}
W.~D\"ornte.
\newblock Untersuchengen \"uber einen verallgemeinerten Gruppenbegriff.
\newblock {\em Math. Z.}, 29:1--19, 1928.

\bibitem{DudMuk96}
W.~A.~Dudek and V.~V.~Mukhin.
\newblock On topological $n$-ary semigroups.
\newblock {\em Quasigroups and Related Systems}, 3:73--88, 1996.

\bibitem{DudMuk06}
W.~A.~Dudek and V.~V.~Mukhin.
\newblock On $n$-ary semigroups with adjoint neutral element.
\newblock {\em Quasigroups and Related Systems}, 14:163--168, 2006.

\bibitem{Fou01}
E.~Foundas.
\newblock Some results of Black's permutations.
\newblock {\em J. Discrete Math. Sci. and Cryptography}, 4(1):47--55, 2001.

\bibitem{KisSom}
G.~Kiss and G.~Somlai.
\newblock A characterization of $n$-associative, monotone, idempotent functions on an interval that have neutral elements.
\newblock {\em Semigroup Forum}, 96(3):438--451, 2018.

\bibitem{MarMayTor03}
J.~Mart\'{\i}n, G.~Mayor, and J.~Torrens.
\newblock On locally internal monotonic operations.
\newblock {\em Fuzzy Sets and Systems}, 137:27-–42, 2003.

\bibitem{Pos40}
E.~L.~Post.
\newblock Polyadic groups,
\newblock {\em Trans. Amer. Math. Soc.}, 48:208--350, 1940.

\bibitem{Vin61}
E.~Vincze.
\newblock Verallgemeinerung eines Satzes \"uber assoziative Funktionen von mehreren Ver\"anderlichen.
\newblock {\em Publ. Math. Debrecen}, 8:68--74, 1961.

\bibitem{YagRyb96}
R.~R.~Yager and A.~Rybalov.
\newblock Uninorm aggregation operators.
\newblock {\em Fuzzy Sets and Systems}, 80:111--120, 1996.

\end{thebibliography}
\end{document}